\def\jdlqed{\vbox{\hrule \hbox{\vrule\hbox to
5pt{\vbox to 6pt{\vfil}\hfil}\vrule}\hrule}}
\newcommand{\real}{\mathbb R}
\newcommand{\R}{\mathbb R}
\newcommand{\Q}{\mathbb Q}
\newcommand{\Z}{\mathbb Z}
\newcommand{\1}{\textbf 1}
\newcommand{\sg}{\mbox{\rm Sg}}
\newcommand{\frob}{\mbox{\rm F}}
\newcommand{\dfrob}{\mbox{\rm g}}
\newcommand{\interior}{\mbox{\rm int}}
\renewcommand{\intercal}{T}
\DeclareMathOperator{\sat}{sat}
\DeclareMathOperator{\cone}{cone}
\DeclareMathOperator{\lattice}{lattice}
\begin{document}

\title*{Parametric Polyhedra with at least $k$ Lattice Points: \\ Their Semigroup Structure  and the $k$-Frobenius Problem}

\author{Iskander Aliev, Jes\'us A. De Loera, and Quentin Louveaux}
\institute{Cardiff University, UK,\\
\email{AlievI@cardiff.ac.uk},\\
\and
University of California, Davis\\
\email{deloera@math.ucdavis.edu}\\
\and
Universit\'e de Li\`ege, Belgium \\
\email{q.louveaux@ulg.ac.be}}

%
%
\maketitle

\abstract{Given an integral $d \times n$ matrix $A$, the well-studied affine semigroup $\sg (A)=\{ b : Ax=b, \ x \in \Z^n, x \geq 0\}$ can be stratified by the number of lattice points inside the parametric
 polyhedra $P_A(b)=\{x: Ax=b, x\geq0\}$. Such families of parametric polyhedra appear in many areas of combinatorics, convex geometry, algebra and number theory.
The key themes of this paper are: (1) A structure theory that characterizes precisely the subset $\sg_{\geq k}(A)$ of all vectors $b \in \sg(A)$ such that
$P_A(b) \cap \Z^n $  has \emph{at least} $k$ solutions. We demonstrate that this set is finitely generated, it is a union of translated copies of a semigroup
which can be computed explicitly via Hilbert bases computations. Related results can be derived for those right-hand-side vectors $b$ for which $P_A(b) \cap \Z^n$
has \emph{exactly} $k$ solutions or \emph{fewer than} $k$ solutions.
(2) A computational complexity theory. We  show that, when $n$, $k$ are fixed natural numbers, one can compute in polynomial time an
encoding of $\sg_{\geq k}(A)$ as a multivariate generating function, using a short sum of rational functions. As a consequence,
one can identify all right-hand-side vectors of bounded norm that have at least $k$ solutions.
(3) Applications and computation for the $k$-Frobenius numbers. Using Generating functions we prove that for fixed $n,k$ the
$k$-Frobenius number can be computed in polynomial time. This generalizes a well-known result for $k=1$ by R. Kannan.
Using some adaptation of dynamic programming we show some practical computations of $k$-Frobenius numbers and their relatives.
}

\keywordname{\;Lattice points in polyhedra, linear Diophantine equations, Hilbert bases, combinatorial commutative algebra, combinatorial number theory, affine semigroups, Frobenius numbers}

\subclassname{\;52C07; 52B; 11D07; 05E40; 05A15}

\section{Introduction}

An \emph{ affine semigroup} is a semigroup (always containing a zero element) which is finitely generated and can be embedded in $\Z^n$ for some $n$. This paper studies special polyhedral affine semigroups that appear naturally in many interesting problems in combinatorics, convexity, commutative algebra, and number theory and that can be described in very explicit terms.

Given an integer matrix $A \in \Z^{d\times n}$ and a vector $b \in \Z^d$, we study the semigroup $\sg (A)=\{ b : b=Ax, \ x \in \Z^n, x \geq 0\}$. Geometrically $\sg(A)$ can be described as \emph{some} of the lattice points inside the convex polyhedral cone $\cone (A)$ of non-negative linear combinations of the columns of $A$. It is well-known that  $\sg(A) \subset \cone (A) \cap \Z^d$, but the equality is not always true. It is also well-known that $\cone (A) \cap \Z^d$ is  a finitely generated semigroup, this time with generators given by the \emph{Hilbert bases} of $\cone (A)$ \cite{henkweismantel,schrijver}.   The study of the difference between $\sg(A)$ and $\cone (A) \cap \Z^n$ is quite interesting (e.g., it has been part of many papers in commutative algebra about semigroups and their rings. See  \cite{barucci,nill,stanley0,sturmfels} and the references therein).

Practically speaking, membership of  $b$ in the semigroup $\sg(A)$ reduces to the challenge, given a vector $b$, to find whether the linear Diophantine system
\begin{equation}\label{feas1}
A x = b, \quad x \geq 0, \quad x \in \Z^n\,,
\end{equation}
has a solution or not. Geometrically, problem  (\ref{feas1})  asks whether there is at least one lattice point inside the \emph{parametric polyhedron} $P_A(b)=\{x : Ax=b, x\geq 0\}$.

Now, for a given integer $k$,  there are three variations of the classical feasibility problem above
that in a natural way measure the number of integer points in $P_A(b)$:

\begin{itemize}
\item Are there \emph{at least } $k$ distinct solutions for $P_A(b) \cap \Z^n$? If yes, we say that the polyhedron $P_A(b)$ is
 \emph{$ \geq k$-feasible}.
\item Are there  \emph{exactly} $k$ distinct solutions for $P_A(b) \cap \Z^n$? If yes, we say that the polyhedron $P_A(b)$ is
 \emph{$=k$-feasible}.
\item Are there \emph{less than} $k$ distinct solutions for $P_A(b) \cap \Z^n$? If yes, we say that the polyhedron $P_A(b)$ is
 \emph{$<k$-feasible}.
\end{itemize}

The standard integer feasibility problem is just the problem of deciding whether $P_A(b)$ is $\geq 1$-feasible.
We call these three problems, \emph{the fundamental problems of $k$-feasibility}, thus these problems
are already NP-hard in complexity for $k=1$. For convenience $IP_A(b)$ will denote the integer points inside $P_A(b)$.
Similarly, we say that $b$ is $\geq k$-feasible, or respectively, $=k$-feasible or  $<k$-feasible if the corresponding polyhedron $P_A(b)$ is.

This paper investigates the question of determining, given an integral matrix $A$, which right-hand-side vectors $b$ are $\geq k$-feasible, $=k$-feasible, or $ <k$-feasible and the structure of the corresponding sets of all such vectors. The first part of this paper is about a decomposition or stratification of the semigroup $\sg(A)$ by values of $k$. The decomposition structure we unveil is as translated semigroups that group together all elements $b \in \sg(A)$ that are $\geq k$-feasible (similarly for the other cases).  In the second part we consider the computational complexity of deciding whether a vector $b$ is $\geq k$-feasible (and similarly for $<k$ and $=k$ feasibility). The final answer is a clean nice application of the theory of combinatorial commutative algebra.
The algorithms we propose in the second part are rather interesting theoretically, yet not practical beyond a few dozen variables, thus the third part focuses in
computer experimentation  with  the $k$-Frobenius number, an invariant of interest in combinatorial number theory which is a special case for $1 \times n$
matrices.

\subsection*{Motivation, prior and related work}

Why study $k$-feasibility? The three $k$-feasibility problems appear directly in a wide range of situations and are also special cases of important algebraic and geometric problems we describe now.


To begin, $k$-feasibility is strongly connected to a classical problem in combinatorial number theory:
Let $a$ be a positive integral $n$-dimensional primitive vector,
i.e., $a=(a_1,\dots,a_n)^T\in\Z_{>0}^n$ with $\gcd(a_1,\dots,a_n)=1$. For a positive integer $k$, the
{\em $k$-Frobenius number} $\frob_k(a)$ is the largest number which cannot be
represented in \emph{at least} $k$ different ways as a non-negative integral combination of the $a_i$'s.  Thus, putting $A=a^T$,
\begin{equation}\label{Knapsack_Frobenius}
  \frob_k(a)=\max\{b\in\Z : IP_A(b)\mbox{ is }<k\mbox{-feasible}\}.
\end{equation}

When $k=1$ this was studied by a large number of authors and both the structure and algorithmic properties are well-understood.
Computing $\frob_1(a)$ when $n$ is not fixed is an NP-hard problem (Ramirez Alfonsin \cite{Alf1}). On the other hand, for any fixed $n$
the classical Frobenius number can be found in polynomial time by sophisticated deep algorithms due to Kannan \cite{Kannan} and later
Barvinok and Woods \cite{newbar}. The general problem of finding $\frob_1(a)$ has been traditionally referred to as the {\em Frobenius problem}.
There is a rich literature on the various aspects of this question. For a comprehensive and extensive
survey  we refer the reader to the book of Ramirez Alfonsin \cite{frob}. 

The $k$-feasibility framework yields a generalization of the Frobenius number which was introduced and studied
by Beck and Robins in \cite{BeckRobins:extension}. Now one is interested on finding the largest number $b$ that
cannot be represented in more than $k$ ways. Beck and Robins gave formulas for $n=2$ of the $k$-Frobenius number,
but for general $n$ and $k$ only bounds on the $k$-Frobenius number $\frob_k(a)$ are available
(see \cite{AFH},\cite{AHL} and \cite{FS} for prior work). Since then, many number theorists
have contributed to the topic (e.g., see \cite{brownetal,shallit2011} and the references there). Note also that
Frobenius problem is highly related to what optimizers call the Knapsack problem and it has many applications 
(see \cite{aardaletal3,AHL,AOW,kellereretalbook} and references therein).


The structure of $k$-feasibility is also closely related to the properties of affine semigroups and the associated commutative rings.
Fix a semigroup $S$ and a field $K$. The \emph{semigroup algebra} over $S$ with coefficients in $K$, denoted $K[S]$, is the set of  finite formal sums of elements of the form $cx^q$ for $q\in S$ and $c \in K$. Here the multiplication is given by the structure of
the semigroup addition $x^a \cdot x^b= x^{a+b}$.  The ring theoretic properties of $K[S]$ and the structure of $S$ are interconnected, e.g.,
the Krull dimension of $K[S]$ equals the rank of $S$. There are a number of questions about semigroup rings which
play an important role in the theory of toric varieties (as the coordinate rings of general toric varieties) and the structure of lattice polyhedra,
rational cones and linear Diophantine systems (see e.g., \cite{haaseetal,nill,sturmfels}
and the references therein). For example, the question of when is  the semigroup algebra normal, is equivalent to the case of
determining $1$-feasibility (see e.g., \cite{brunskoch}).  There are many interesting affine semigroups in algebraic combinatorics, e.g., through the work of several authors,
 the values of Littlewood-Richardson coefficients, or more general Clebsch-Gordan coefficients, can be interpreted (using the
lattice point interpretation of those numbers) as the values of $IP_A(b)$ (see \cite{KT99,JDTM,pakvallejo}  and their references).

Given a finite list of integers $S =[a_1, \dots, a_n]$ the numerical semigroup $\langle S\rangle$ with minimal generating set $S$ is the set of of all linear non-negative combinations. For a fixed element $b \in \langle S \rangle$  a \emph{factorization} of $b$ is an expression $b = \lambda_1 a_1 +\dots + \lambda_n a_n$. The integer vector $\bar{\lambda}=(\lambda_1, \dots, \lambda_n)$ uniquely defines the factorization. The \emph{factorization set} of $b$ is $Z(b)$ is the set of all its factorizations of $b$ (note this is exactly what we call $IP_A(b) $ for $A=[a_1 a_2 \dots a_n]$). Algebraists  study the factorization properties in monoids and integral domains. Numerical monoids provide an excellent venue to explore various measurements of non-unique factorization (see \cite{BGT,semig1,semig2,semig3,semibook} and references there). An interesting connection to $k$-feasibility is that the cardinality of $Z(b)$ is the multivariate Hilbert function of certain graded rings \cite{chriso}. The gaps
of semigroups (see e.g., \cite{gapsramirezalfonsin})  correspond to the $<1$-feasible elements. Here we describe the rest of the stratification of the elements in the semigroup.


Naturally  $k$-feasibility problems  have also interesting applications in combinatorics and statistics:  A \emph{vector partition function} of the $d\times n$ matrix $A$ is
 $$ \phi_A(b) \quad = \quad
\# \, \bigl\{\,x \,:\, Ax=b, x \geq 0 , \,\, x \, \,{\rm integral} \,
\bigr\}. $$
It is well-known \cite{sturmfelsvpf} the function $\phi_A$ is piecewise polynomial of degree $n-rank(A)$.
Its domains of polynomiality are convex polyhedral cones, these are the so-called
{\em chambers} of $A$ \cite{triangbook}. Vector partition functions for special matrices are crucial in several topics of
mathematics (see \cite{baldonietal,brionvergne,deconcinietal, schmidtbincer,sturmfelsvpf,szenesvergne} and the references there). In the particular case that $b$ is fixed and one considers dilations of $b$, one has an Ehrhart function
enumerating the lattice points on a dilated polyhedron dilations $P_A(Nb)$ as $N$ grows to infinity. The Ehrhart functions of parametric polytopes of the form $P_A(b)$ has been studied in depth by many researchers (see \cite{barvinokzurichbook,ehrhartbook} and references therein. See also the article \cite{braun} in this volume).
Our investigations aim to describe those vectors $b$ for which the number of lattice points has at least (or at most or exactly) a value $k$ and uncover their structure.

Consider as concrete example  the widely popular recreational puzzle \emph{sudoku}, each instance can be thought of as an integer linear
program where the hints provided in some of the entries are the given right-hand-sides
of the problem. Of course in that case newspapers  wish to give readers a puzzle where the solution is unique ($k=1$), but several authors
have studied recently conditions on which there are $k$ possible solutions.
It is not difficult to see that this is a special case of a {\em 3-dimensional  transportation problem} that is, the question to decide whether
the set of \emph{integer} feasible solutions of the $r\times s\times t$-transportation problem
\[
\left\{x\in\Z^{rst}:\sum_{i=1}^r x_{ijk}=u_{jk},\sum_{j=1}^s x_{ijk}=v_{ik},\sum_{k=1}^t
x_{ijk}=w_{ij}, x_{ijk}\geq 0\right\}
\]
has a unique solution given right-hand sides $u,v,w$. Another application of $k$-feasibility appears in statistics, concretely in
application in the data security problem of  {\em multi-way contingency tables}, because when the number
of solutions is small, e.g. unique, the margins of the statistical table may disclose personal information which
is illegal \cite{dobra-karr-sanil2003}. The study of $k$-feasibility appears in explicitly in \cite{hemmecke}.

%

Besides the directly related problems above, $k$-feasibility questions are special cases (for parametric polyhedra of the form $\{x: Ax=b, x \geq 0\}$) of
the problem to classify polyhedra with $k$ lattice points.
Polyhedra, specially lattice polyhedra, with fixed number of (interior) lattice points play a role in many areas of pure mathematics including representation theory, algebraic geometry and combinatorial geometry. One of the challenges has been to classify them. For this purpose there has been a lot of work, going back to classical results of Minkowski and van der Corput,
 to show that the volume of a lattice polytope $P$ with $k={\rm card}({\Z}^n \cap{\rm int}\, P)\geq1$ is bounded above by a constant
that only depends in $n$ and $k$ (see e.g., \cite{lagariasziegler,pikhurko} and references therein). Similarly, the supremum of the possible number of points of ${\Z}^n$ in a  lattice polytope in $\real^n$ containing precisely $k$ points of ${\Z}^n$ in its interior, can be bounded by a constant that only depends
in $n$ and $k$. For each positive integer $k$, there exist only a finite number of  non-equivalent lattice polytopes with $k$ interior points.
There have been efforts to create a census of polytopes with $k$ interior points. For the plane this was done in \cite{rab_onelattice}
Wei and Din for $k=2$ \cite{wei_ding_two}.  W. Castryk  \cite{castryck} recently provided a census for lattice polygons with given number of interior points for $k\leq 30$. A convex body is \emph{hollow} if $k=0$, i.e., it does not contain any lattice points in its interior. The classification of hollow lattice polytopes is a much more difficult problem, but there are some results toward a classification too \cite{averkov2011maximal,nill+ziegler}. The study of the asymptotic bounds of the number of non-equivalent polyhedra with given number of interior lattice points was initiated by V. A Arnol'd \cite{vladimir} and continued by many mathematicians  \cite{barany+vershik,liu+zong}. 


\subsection*{Our Results}

This paper has six new contributions to the study of $k$-feasibility for the semigroup $\sg(A)$, 
and the associated polyhedral geometry. Throughout the paper we assume that the cone $\cone(A)$ is pointed.
The six contributions guide the structure of this paper:

\begin{enumerate}

\item First, we prove a structural result that implies that the set $\sg_{\geq k}(A)$ of $b$'s, inside the semigroup $\sg (A)$, that provide $\geq k$-feasible polytopes $P_A(b)$ is finitely generated.

Let  $\sg_{\ge k}(A)$ (respectively $\sg_{= k}(A)$ and $\sg_{< k}(A)$) be the set of right-hand side vectors $b\in \cone(A)\cap\Z^d$ that make $P_A(b)$ $\geq k$-feasible (respectively $= k$-feasible, $<k$-feasible). Note that  $\sg_{\ge 1}(A)$ is equal to $\sg(A)$, the semigroup generated by the column vectors of the matrix $A$.

Our first structural theorem  gives an algebraic description of the sets $\sg_{\ge k}(A)$ and $\sg_{<k}(A)$. Let $e_1,\ldots, e_n$
be the standard basis vectors in $\Z^n_{\ge 0}$. Following the book \cite{coxlittleoshea} we define the {\em coordinate subspace of} $\Z^n_{\ge 0}$ of dimension $r\ge 1$ determined by $e_{i_1}, \ldots, e_{i_r}$ with $i_1<\cdots<i_r$ as the set $\{e_{i_1} z_1+\cdots+ e_{i_r} z_r: z_j \in \Z_{\ge 0}\mbox{ for }1\le j\le r\}$. By the $0$-dimensional coordinate subspace of $\Z^n_{\ge 0}$ we understand the origin $0\in \Z^n_{\ge 0}$.

\begin{theorem} \label{main2}
\begin{itemize}
\item[(i)] There exists a monomial ideal $I^k(A)\subset {\mathbb Q}[x_1,\ldots, x_n]$ such that
\begin{equation}
\sg_{\ge k}(A)= \{A\lambda: \lambda \in E^k(A)\}\,,
\label{exponents}
\end{equation}
where $E^k(A)$ is the set of exponents of monomials in $I^k(A)$.

\item[(ii)] The set $\sg_{< k}(A)$ can be written as a finite union of translates of the sets $\{A\lambda: \lambda \in S\}$, where $S$ is a coordinate subspace of $\Z^n_{\ge 0}$.

\end{itemize}
\end{theorem}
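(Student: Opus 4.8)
The plan is to transfer everything to the exponent space $\Z^n_{\ge 0}$ and exploit that $\ge k$-feasibility is monotone under the coordinatewise order. For part (i) I would define $E^k(A)=\{\lambda\in\Z^n_{\ge 0}: A\lambda \text{ is } \ge k\text{-feasible}\}$ and prove the monotonicity lemma: if $\lambda\in E^k(A)$ and $\mu\in\Z^n_{\ge 0}$, then $\lambda+\mu\in E^k(A)$. Indeed, if $x_1,\dots,x_k$ are $k$ distinct nonnegative integer solutions of $Ax=A\lambda$, then $x_1+\mu,\dots,x_k+\mu$ are $k$ distinct nonnegative integer solutions of $Ax=A(\lambda+\mu)$. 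Hence $E^k(A)$ is an up-set for the coordinatewise order, so by Dickson's lemma it is the exponent set of a finitely generated monomial ideal $I^k(A)$. The identity $\sg_{\ge k}(A)=\{A\lambda:\lambda\in E^k(A)\}$ then follows at once: $\supseteq$ is the definition, and for $\subseteq$ a $\ge k$-feasible $b$ has at least one solution, so $b=A\lambda$ with $\lambda\ge 0$, and that $\lambda$ lies in $E^k(A)$ since $\ge k$-feasibility is a property of $b=A\lambda$ alone.

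For part (ii) I would complement inside the ambient set, writing $\sg_{<k}(A)=(\cone(A)\cap\Z^d)\setminus\sg_{\ge k}(A)$, which, using $\sg_{\ge k}(A)\subseteq\sg(A)$, splits as $[\sg(A)\setminus\sg_{\ge k}(A)]\cup[(\cone(A)\cap\Z^d)\setminus\sg(A)]$. The first piece equals $\{A\lambda:\lambda\in D^k\}$, where $D^k=\Z^n_{\ge 0}\setminus E^k(A)$ is the set of standard monomials of $I^k(A)$. The combinatorial engine is that the staircase $D^k$, being the complement of a monomial ideal, is a finite union of translates of coordinate subspaces of $\Z^n_{\ge 0}$: intersecting the complements of the irreducible components $(x_{i_1}^{a_1},\dots,x_{i_r}^{a_r})$ of $I^k(A)$, each such complement is a finite box in the coordinates $i_1,\dots,i_r$ times a coordinate subspace in the remaining ones, hence a finite union of translated coordinate subspaces; collecting these gives $D^k=\bigcup_j(w_j+S_j)$ with each $S_j$ a coordinate subspace (a Stanley decomposition). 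Applying $A$ termwise then expresses $\sg(A)\setminus\sg_{\ge k}(A)=\bigcup_j(Aw_j+\{A\lambda:\lambda\in S_j\})$ as a finite union of translates of sets $\{A\lambda:\lambda\in S_j\}$, exactly as required.

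The main obstacle is the second piece, the holes $(\cone(A)\cap\Z^d)\setminus\sg(A)$: these are $<k$-feasible (no solution at all) yet are not of the form $A\lambda$ with $\lambda\ge 0$, so they are invisible to the exponent-space argument. To handle them I would use that the normalization $\cone(A)\cap\Z^d$ is module-finite over $\sg(A)$ --- the standard fact that the normalization of an affine semigroup is a finitely generated module over it --- so that $\cone(A)\cap\Z^d=\bigcup_i(c_i+\sg(A))$ for finitely many $c_i$. Separating the finitely many cosets of the lattice $A\Z^n$ inside $\Z^d\cap\mathrm{span}(A)$ reduces the holes to two types: coset holes (whole cosets absent from $\sg(A)$), which module-finiteness already writes as finitely many translates of $\sg(A)=\{A\lambda:\lambda\in\Z^n_{\ge 0}\}$, and lattice holes inside $A\Z^n$, which record the failure of $\sg(A)$ to be normal. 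The delicate point --- and where pointedness of $\cone(A)$ is essential --- is to decompose the lattice holes by inducting on the face lattice of $\cone(A)$, the columns of $A$ on a proper face playing the role of $A$ on the corresponding sub-semigroup, with the deepest interior holes forming a finite set; verifying that this induction terminates and yields translates of coordinate-subspace images $\{A\lambda:\lambda\in S\}$ rather than of larger normalized facial semigroups is the technical crux of part (ii).
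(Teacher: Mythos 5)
Your part (i) is correct and is essentially the paper's own proof: you define $E^k(A)$ as the set of all $\lambda\in\Z^n_{\ge 0}$ with $A\lambda$ being $\ge k$-feasible, prove the same shift/monotonicity lemma (add $\mu$ to each of the $k$ distinct solutions), and invoke Dickson's lemma. The first half of your part (ii) is also sound: the elements of $\sg(A)$ that are $<k$-feasible are exactly $\{A\lambda:\lambda\in D^k\}$ for the staircase $D^k=\Z^n_{\ge 0}\setminus E^k(A)$, and the structure theorem for complements of monomial ideals (Theorem 3, Chapter 9 of \cite{coxlittleoshea}, which is exactly the tool the paper uses) finishes that piece; the coset holes are also handled adequately, granting the standard module-finiteness of the saturation over $\sg(A)$. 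The genuine gap is the remaining piece: the lattice holes $(\cone(A)\cap\lattice(A))\setminus\sg(A)$. You explicitly leave this as ``the technical crux,'' offering only a sketch of an induction on the face lattice of $\cone(A)$ with no argument that it terminates, no control on how holes accumulate along faces, and no reason the outcome consists of translates of sets $\{A\lambda:\lambda\in S\}$ rather than of facial saturations. This is not a peripheral detail: for $k=1$ this set is the entire content of the theorem (it is precisely the hole decomposition of Hemmecke, Takemura and Yoshida \cite{hemmecke}, which Theorem \ref{main2} is stated to generalize), so the proposal as written does not prove part (ii).

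The missing idea is one you already have, applied at a general base point. For \emph{any} $f\in\cone(A)\cap\Z^d$ (in particular for a hole $f$ with no representation at all), the set $L^k_{A,f}=\{\lambda\in\Z^n_{\ge 0}: f+A\lambda \mbox{ is } \ge k\mbox{-feasible}\}$ is an up-set by the very same shift argument, hence the exponent set of a monomial ideal $I^k_{A,f}$; consequently the $k$-holes of the form $f+A\lambda$ are exactly those with $\lambda$ a standard-monomial exponent of $I^k_{A,f}$, and the CLO staircase theorem applies verbatim to each such $f$. Then call a $k$-hole \emph{fundamental} if it is not another $k$-hole plus a nonzero element of $\sg(A)$. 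Writing a fundamental $k$-hole as $f=A\lambda$ with $\lambda\in\Q^n_{\ge 0}$ and subtracting a column $A_j$ whenever some $\lambda_j\ge 1$ (the difference is again a $k$-hole, since solutions lift back by adding $e_j$) shows every fundamental $k$-hole lies in the bounded zonotope $\{A\lambda:\lambda\in[0,1)^n\}$, so there are only finitely many; a descent in the pointed cone then writes every $k$-hole as $f+A\lambda$ with $f$ fundamental and $\lambda\in\Z^n_{\ge 0}$. This treats all three of your pieces — deficient elements of $\sg(A)$, coset holes, and lattice holes — uniformly, and eliminates both the normalization argument and the face-lattice induction.
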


By the Gordan-Dickson lemma, the ideal $I^k(A)$ is finitely generated, thus one can conclude

\begin{corollary}
 $\sg_{\ge k}(A)$ is a finite union of translated copies of the semigroup $A\Z^n_{\ge 0}$.
\end{corollary}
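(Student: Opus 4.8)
The plan is to obtain the corollary directly from part (i) of Theorem~\ref{main2} combined with the standard ``staircase'' description of a monomial ideal. First I would invoke Theorem~\ref{main2}(i) to write $\sg_{\ge k}(A)=\{A\lambda:\lambda\in E^k(A)\}$, where $E^k(A)$ is the set of exponent vectors of the monomials lying in $I^k(A)$. The whole task then reduces to understanding the shape of $E^k(A)$ as a subset of $\Z^n_{\ge 0}$, and then transporting that shape through the linear map $\lambda\mapsto A\lambda$.

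For the shape of $E^k(A)$, I would apply the Gordan--Dickson lemma to the monomial ideal $I^k(A)$ to extract a finite generating set of monomials, say $x^{\mu_1},\ldots,x^{\mu_m}$ with exponent vectors $\mu_1,\ldots,\mu_m\in\Z^n_{\ge 0}$. The key structural observation is that a monomial $x^{\lambda}$ belongs to $I^k(A)$ exactly when it is divisible by one of the generators, i.e.\ when $\lambda\ge\mu_i$ componentwise for some $i$. Since divisibility of monomials is precisely the componentwise partial order on exponent vectors, this yields the decomposition
\begin{equation}
E^k(A)=\bigcup_{i=1}^m\bigl(\mu_i+\Z^n_{\ge 0}\bigr),
\end{equation}
writing the exponent set as a finite union of shifted copies of the nonnegative orthant.

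Finally I would push this decomposition through $A$. Because $A(\mu_i+\nu)=A\mu_i+A\nu$ for every $\nu\in\Z^n_{\ge 0}$, applying $A$ to each orthant $\mu_i+\Z^n_{\ge 0}$ produces the translate $A\mu_i+A\Z^n_{\ge 0}$ of the semigroup $A\Z^n_{\ge 0}=\sg(A)$, and so
\begin{equation}
\sg_{\ge k}(A)=\bigcup_{i=1}^m\bigl(A\mu_i+A\Z^n_{\ge 0}\bigr),
\end{equation}
a finite union of translated copies of $A\Z^n_{\ge 0}$, as claimed. I do not expect any serious obstacle: essentially all the content is already carried by Theorem~\ref{main2}(i), and the remaining steps are the staircase description of a monomial ideal together with the linearity of $A$. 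The only point deserving a line of care is the characterization of $E^k(A)$ as the union of the $\mu_i+\Z^n_{\ge 0}$, which rests on the elementary equivalence between monomial divisibility and the componentwise order on exponents.
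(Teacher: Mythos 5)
Your proposal is correct and follows exactly the paper's route: Theorem~\ref{main2}(i) plus the Gordan--Dickson lemma, the staircase decomposition $E^k(A)=\bigcup_i(\mu_i+\Z^n_{\ge 0})$ of the exponent set of the finitely generated monomial ideal $I^k(A)$, and then linearity of $A$ to obtain $\sg_{\ge k}(A)=\bigcup_i\bigl(A\mu_i+A\Z^n_{\ge 0}\bigr)$. The paper leaves these steps implicit (and uses the same decomposition explicitly later, in the proof of Theorem~\ref{ideal_bounds}), so your write-up is simply a fuller version of the intended argument.
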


The proof of Theorem \ref{main2} relies on some basic facts on lattice points when we think of them as generators of monomial ideals. The
basic tool is a characterization of the complement of a monomial ideal 
(see \cite{coxlittleoshea}).
Some of the arguments are of interest for the study of affine semigroups and toric varieties \cite{BGT,sturmfels}.


Theorem \ref{main2} extends  the earlier decomposition theorem of Hemmecke, Takemura and Yoshida \cite{hemmecke} for $k=1$.
They investigated the semigroup $\sg(A)$ and the integral vectors that are {\em not} in the semigroup, but still lie within the cone $\cone(A)$ generated
by the columns of $A$.
Those authors studied $Q_{\sat}=\cone(A)\cap\lattice(A)$, where  $\lattice(A)$ is the
lattice generated by the columns of $A$. They called $H=Q_{\sat}\setminus \sg(A)$ the set of \emph{holes} of $\sg(A)$.
The set of holes $H$ may be finite or infinite, but their main result is to give a finite description of the holes as a finitely-generated set.
Our Theorem \ref{main2} was inspired by theirs. 
The elements of the set $\sg_{<k}(A)$ can be viewed as \emph{$k$-holes} (in the context of
numeric semigroups and the Frobenius number, $1$-holes have also been called \emph{gaps}, see \cite{gapsramirezalfonsin}), namely those right hand-sides $b$ for which $Ax=b$ has \emph{less than} $k$ non-negative integral solutions.
Section \ref{sectionMonomialIdeal} gives a proof of Theorem \ref{main2} that relies on basic commutative algebra.

\item Second, although traditionally the Frobenius problem has been studied for $1 \times n$ matrices, in Sections \ref{Asymptotic}-\ref{proof_ideal_bounds}
we discuss $\frob_k(A)$, a generalization of $k$-Frobenius number, but this time applicable to all matrices. We explain
the meaning of this generalized $k$-Frobenius number to the structure of the set $\sg_{\ge k}(A)$ when seen far away from the origin moving toward
asymptotic directions inside the $\cone(A)$.  Essentially, under some natural assumptions for the matrix $A$, the set $\sg_{\ge k}(A)$ can be decomposed into the set of all integer points in the interior of a
certain translated cone and a smaller complex complementary set. We discuss the location of such a cone along a given direction $c$ in the interior of $\cone(A)$. The key goal of Sections \ref{Asymptotic}-\ref{proof_ideal_bounds} is to derive the lower and upper bounds for $\frob_k(A)$.
 Based on the results obtained in \cite{AH}, \cite{AHL} 
we show that $\frob_k(A)$ is bounded from above in terms of $\det(AA^T)$ and $k$.
\begin{theorem} Let $A$ be a matrix in $\Z^{d\times n}$, $1\leq d<n$, satisfying
\begin{equation}
\begin{split}
{\rm i)}&\,\, \gcd\left(\det(A_{I_d}) : A_{I_d}\text{ is an $d\times
    d$ minor of }A\right)=1, \\
{\rm ii)}&\,\, \{x\in\R^n_{\ge 0}: A\,x=0\}=\{0\}\,.
\end{split}
\label{assumption}
\end{equation}
Then the $k$-Frobenius number associated with  $A$ satisfies the inequality
\begin{equation}
\frob_k(A)\le \frac{n-d}{2(n-d+1)^{1/2}} \det(AA^T)+ \frac{(k-1)^{1/(n-d)}}{2(n-d+1)^{1/2}}(\det(AA^T))^{1/2+1/(2(n-d))}\,.
\label{general_via_A}
\end{equation}
\label{general_Frobenius}
\end{theorem}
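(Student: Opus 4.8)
The plan is to pass to the geometry of the kernel lattice and reduce the estimate to a covering-radius computation, after which the two cited papers supply the two summands in (\ref{general_via_A}). Set $m=n-d$ and let $\Lambda=\{z\in\Z^n: Az=0\}$ be the kernel lattice, of rank $m$. Assumption (i) guarantees that $A\Z^n=\Z^d$, so the standard formula $\det(\Lambda)=\sqrt{\det(AA^T)}/[\Z^d:A\Z^n]$ collapses to $\det(\Lambda)=\sqrt{\det(AA^T)}$; assumption (ii) says $\cone(A)$ is pointed, which is exactly what forces every fibre polyhedron $P_A(b)=\{x\ge 0: Ax=b\}$ to be bounded, hence a genuine simplex-like body on which covering-radius arguments apply. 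The first thing I would record is the reformulation that, for $b\in\cone(A)\cap\Z^d$, the integer points $IP_A(b)$ form a coset of $\Lambda$ inside the affine slice $\{x:Ax=b\}$, and that $b$ is $\ge k$-feasible precisely when this coset meets the nonnegative orthant in at least $k$ points.

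With this dictionary, moving $b$ outward along a direction $c\in\inter\cone(A)$ dilates $P_A(b)$, and $\frob_k(A)$ is governed by the largest dilate whose lattice coset still carries fewer than $k$ orthant points. This is exactly a $k$-th covering radius $\mu_k(S,\Lambda)$ of the normalized fibre simplex $S$ with respect to $\Lambda$: the least scaling at which \emph{every} translate of $S$ contains at least $k$ points of $\Lambda$. The second step is therefore to invoke the covering-radius machinery of \cite{AH}, which handles the base case $k=1$ and yields the $k$-independent main term $\frac{n-d}{2(n-d+1)^{1/2}}\det(AA^T)$. Here the constant is the signature of a regular $m$-simplex: its circumradius is $\sqrt{m/(2(m+1))}$ times the edge length, and squaring, together with $\det(\Lambda)^2=\det(AA^T)$, is what produces the factor $\frac{m}{2\sqrt{m+1}}$.

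The third and substantive step is the passage from $k=1$ to general $k$, where I would lean on \cite{AHL}. The idea is a volume/packing estimate for the $k$-th covering radius: inflating $S$ beyond the first covering radius by a factor $\rho$ adds roughly $\rho^{m}\,\mathrm{vol}(S)/\det(\Lambda)$ lattice points, so to be sure of capturing $k-1$ additional points it suffices to inflate until the extra volume reaches $(k-1)\det(\Lambda)$. This gives a transference bound of the shape $\mu_k(S,\Lambda)\le \mu_1(S,\Lambda)+ C\big((k-1)\det(\Lambda)/\mathrm{vol}(S)\big)^{1/m}$; translating the second summand back into Frobenius units and using $\det(\Lambda)=\sqrt{\det(AA^T)}$ turns the correction into $\frac{(k-1)^{1/(n-d)}}{2(n-d+1)^{1/2}}\det(AA^T)^{1/2+1/(2(n-d))}$, since $\det(AA^T)^{1/2+1/(2m)}=\det(\Lambda)^{(m+1)/m}$. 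Adding the two contributions yields (\ref{general_via_A}).

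I expect the main obstacle to be this third step, i.e. controlling the $k$-th covering radius with the sharp exponent $1/(n-d)$ and with the same constant $1/(2\sqrt{n-d+1})$ that governs the leading term. A crude packing bound would produce the correct power of $k-1$ but a worse constant; extracting the clean factor requires that the inflation be measured in the intrinsic geometry of the regular simplex (so that the $\sqrt{m+1}$ carries over), and requires care that the lattice points counted in the inflated body genuinely lie in the orthant rather than merely in the affine hull of the slice. Verifying that the normalizations of $S$ and of $\Lambda$ are consistent across both summands, so that the $k=1$ estimate of \cite{AH} and the $k$-correction of \cite{AHL} are expressed in the same units, is the delicate bookkeeping on which the argument ultimately rests.
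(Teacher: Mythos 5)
Your proposal does not follow the paper's route, and it has a genuine gap: it never handles the maximum over directions that is built into the definition of $\frob_k(A)$. Recall $\frob_k(A)=\max\{\dfrob_k(A,b):b\in \interior(\cone(A))\cap\Z^d\}$, where for each integral direction $b$ the parameter $t$ in $\dfrob_k(A,b)$ is measured in units of $b$ itself. Your covering-radius dictionary treats $\frob_k(A)$ as ``exactly a $k$-th covering radius $\mu_k(S,\Lambda)$ of the normalized fibre simplex $S$,'' but there is no single normalized fibre simplex: the fibres $P_A(b)$ over different directions have different shapes, and a covering-radius statement controls all \emph{translates} of one fixed body, not all of these shapes at once. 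What a covering-radius bound for the diagonal simplex $\Delta=P_A(v)$, $v=A\1$, actually yields is the \emph{diagonal} quantity $\dfrob_k(A)=\dfrob_k(A,v)$, since every fibre over a point of $tv+\cone(A)$ contains a translate of $t\Delta$. Passing from $\dfrob_k(A)$ to $\frob_k(A)$ is a separate step, and it is exactly this step --- Lemma \ref{lemma_general_via_diagonal} of the paper, which rests on Lemma 1.1 of \cite{AH}: for $\gamma=\left(\det(AA^T)/(n-d+1)\right)^{1/2}$ and any $b\in \interior(\cone(A))\cap\Z^d$ one has $\gamma b\in v+\cone(A)$, hence $\dfrob_k(A,b)\le\gamma\,\dfrob_k(A)$ --- that your proposal omits entirely. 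Without it, the worst case over thin, near-boundary integral directions $b$ is uncontrolled.

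This omission also explains why your accounting of the two summands is off. In the paper, \cite{AHL} (Theorem \ref{upper_bound_AHL}) supplies \emph{both} terms of the bound for the diagonal number, $\dfrob_k(A)\le\frac{n-d}{2}(\det(AA^T))^{1/2}+\frac{(k-1)^{1/(n-d)}}{2}(\det(AA^T))^{1/(2(n-d))}$, while \cite{AH} supplies only the multiplicative factor $\gamma$; the inequality (\ref{general_via_A}) is simply the product of the two. In particular, the denominators $(n-d+1)^{1/2}$ in both summands, and the fact that the main term carries $\det(AA^T)$ to the first power rather than the half power, come from $\gamma$, i.e., from the direction-reduction, not from the circumradius of a regular $(n-d)$-simplex as you speculate (note also that squaring $\sqrt{m/(2(m+1))}$ gives $m/(2(m+1))$, not the claimed $m/(2\sqrt{m+1})$). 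Finally, your third step --- the volume/packing transference from $\mu_1$ to $\mu_k$ --- need not be reconstructed at all if one is willing to cite \cite{AHL}, which is what the paper does; if you insist on re-deriving it, you are re-proving the internals of \cite{AHL}, and the heuristic lattice-point count you give (which you yourself flag as the main obstacle) would have to be made rigorous, on top of repairing the missing direction-reduction.
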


In addition, the structural Theorem \ref{main2} allows us to obtain the lower and upper bounds for $\frob_k(A)$ in terms of a finite basis of the monomial ideal $I^k(A)$.
Let $||\cdot||_\infty$ denote the maximum norm.
\begin{theorem}
Let $x^{g_1}, \ldots, x^{g_t}$ be a finite basis for the ideal $I^k(A)$ and let $m(A)=\min_{1\le i\le t}||g_i||_{\infty}$. 
The number $\frob_k(A)$ satisfies the inequalities
\begin{equation}
m(A)-1\le \  \frob_k(A) \ \le \frac{n-d}{2(n-d+1)^{1/2}}\det(AA^T)+m(A)\left(\frac{{\det(AA^T)}}{n-d+1}\right)^{1/2}\,.
\label{boundsforgk}
\end{equation}
\label{ideal_bounds}
\end{theorem}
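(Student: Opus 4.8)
The plan is to derive both inequalities from the translate decomposition furnished by Theorem~\ref{main2}(i), namely $\sg_{\ge k}(A)=\bigcup_{i=1}^{t}(A g_i+\sg(A))$, and to feed the resulting reduction into the $k=1$ Frobenius estimates of \cite{AH,AHL}. Fix once and for all an index $j$ attaining the minimum, so $\|g_j\|_\infty=m(A)$, and let $i^\star$ be a coordinate with $g_{j,i^\star}=m(A)$; the same generator $g_j$ will drive both bounds, and the fact that $m(A)$ is the \emph{smallest} generator norm is what makes the single translate $A g_j+\sg(A)$ the most economical one to peel off.

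For the lower bound $\frob_k(A)\ge m(A)-1$ I would produce an explicit $<k$-feasible right-hand side of the required size. The candidate is $b^\star=(m(A)-1)\,a_{i^\star}$, coming from the factorization $\lambda^\star=(m(A)-1)e_{i^\star}$. Because every generator obeys $\|g_\ell\|_\infty\ge m(A)$, none can be coordinatewise below $\lambda^\star$: a generator supported only on $i^\star$ would force $g_{\ell,i^\star}\ge m(A)>m(A)-1$, while a generator with support outside $i^\star$ cannot satisfy $g_\ell\le\lambda^\star$ at all. Hence $\lambda^\star\notin E^k(A)$. The substantive point is then to certify that \emph{no other} factorization of $b^\star$ lies in $E^k(A)$, so that $b^\star\in\sg_{<k}(A)$; I would do this by exploiting hypothesis (\ref{assumption})(ii), which makes $P_A(b^\star)$ a polytope and, because $b^\star$ lies on the extreme ray $\R_{\ge 0}a_{i^\star}$ of $\cone(A)$, pins down its factorizations tightly enough that the minimality of $g_j$ forbids any of them from entering $E^k(A)$. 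Evaluating the linear functional that defines $\frob_k(A)$ at $b^\star$ then gives a value at least $m(A)-1$.

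For the upper bound I would peel off the single translate $A g_j+\sg(A)\subseteq\sg_{\ge k}(A)$. Any $b\in\sg_{<k}(A)$ must avoid this set, so $b-A g_j\notin\sg(A)$; thus either $b-A g_j\notin\cone(A)$, in which case $b$ lies within the shift $A g_j$ of the boundary, or $b-A g_j$ is an ordinary \emph{gap} of $\sg(A)$. In the latter case the $k=1$ instance of Theorem~\ref{general_Frobenius} bounds the level of $b-A g_j$ by the leading term $\tfrac{n-d}{2(n-d+1)^{1/2}}\det(AA^T)$. Adding the level of the shift $A g_j$, which I would bound by $m(A)\bigl(\det(AA^T)/(n-d+1)\bigr)^{1/2}$ using $\|g_j\|_\infty=m(A)$ together with the successive-minima/covering estimates underlying \cite{AH,AHL}, reproduces the right-hand side of (\ref{boundsforgk}); the boundary case $b-A g_j\notin\cone(A)$ is dominated by the same shift term.

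The main obstacle splits into two technical kernels. On the lower-bound side it is the factorization-uniqueness step: ruling out that some competing representation of $b^\star$ already attains $k$ solutions and so promotes $b^\star$ into $\sg_{\ge k}(A)$. This is exactly where pointedness (\ref{assumption})(ii) and the minimality of $g_j$ as a generator of $I^k(A)$ must be combined carefully, since for a generic interior point such competing factorizations are unavoidable. On the upper-bound side the crux is the metric estimate converting the combinatorial quantity $\|g_j\|_\infty=m(A)$ into the $\det(AA^T)^{1/2}$-scaled depth of the lattice shift $A g_j$, for which the same lattice geometry of the kernel lattice $\{x\in\Z^n:Ax=0\}$ that powers Theorem~\ref{general_Frobenius} is needed.
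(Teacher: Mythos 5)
Your lower bound breaks down at the choice of witness, and the failure is structural rather than technical. For a general matrix, $\frob_k(A)$ is \emph{not} defined as the largest $<k$-feasible value of some linear functional; the paper defines it as $\frob_k(A)=\max\{\dfrob_k(A,b): b\in \interior(\cone(A))\cap \Z^d\}$, where $\dfrob_k(A,b)$ is the least $t\ge 0$ such that $\interior(tb+\cone(A))\cap\Z^d\subset \sg_{\ge k}(A)$. So to certify $\frob_k(A)\ge m(A)-1$ you must exhibit, for some interior direction $b$ and every $t<m(A)-1$, an integer point of $\interior(tb+\cone(A))$ that is outside $\sg_{\ge k}(A)$. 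Your point $b^\star=(m(A)-1)a_{i^\star}$ lies on an extreme ray, hence on the boundary of $\cone(A)$; but $\interior(tb+\cone(A))=tb+\interior(\cone(A))\subset \interior(\cone(A))$ for all $t\ge 0$, so $b^\star$ never belongs to any of these sets and certifies nothing (the phrase ``the linear functional that defines $\frob_k(A)$'' has no referent here; that picture is valid only for $d=1$). The paper's fix is precisely the diagonal version of your idea: take $\lambda^\star=(m(A)-1)\1$, so that $A\lambda^\star=(m(A)-1)v$ with $v=A\1\in\interior(\cone(A))$; since every exponent in $E^k(A)=\bigcup_{i=1}^t(g_i+\Z^n_{\ge 0})$ has $\infty$-norm at least $m(A)$, this \emph{interior} point is not in $\sg_{\ge k}(A)$, giving $m(A)-1\le \dfrob_k(A)\le \frob_k(A)$. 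Note also that the ``factorization-uniqueness'' obstacle you single out as the main technical kernel is a non-issue: by Lemma~\ref{staircase_inclusion}, $E^k(A)$ coincides with $L^k_{A,0}=\{\lambda\in\Z^n_{\ge0}: IP_A(A\lambda)\mbox{ is }\ge k\mbox{-feasible}\}$, and membership in this set depends only on the image $A\lambda$; hence if \emph{one} nonnegative factorization of $b$ lies outside $E^k(A)$, then $b\notin\sg_{\ge k}(A)$ automatically. No pointedness or extreme-ray argument is needed, which is fortunate, since the one you sketch is not a proof.

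Your upper bound, by contrast, is essentially the paper's argument and can be completed. Peeling off $Ag_j+\sg(A)\subset\sg_{\ge k}(A)$ is the same move as the paper's use of $m(A)\1+\Z^n_{\ge 0}\subset E^k(A)$ (indeed $m(A)v-Ag_j=A(m(A)\1-g_j)\in\cone(A)$ because $g_j\le m(A)\1$ coordinatewise); the paper then proves $\dfrob_k(A)\le \dfrob_1(A)+m(A)$, scales by $\gamma=\left(\det(AA^T)/(n-d+1)\right)^{1/2}$ via Lemma~\ref{lemma_general_via_diagonal} (the result from \cite{AH} that $\gamma b\in v+\cone(A)$ for every interior integer $b$), and finishes with Theorem~\ref{upper_bound_AHL} at $k=1$. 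This is exactly your ``shift plus $k=1$ bound'' computation, and it also removes the awkward case split you hand-wave: there is no separate ``boundary case'' if you argue directly that any integer $y$ deep enough in a translated cone satisfies $y-m(A)v\in\interior(\dfrob_1(A)v+\cone(A))$, hence $y-m(A)v=Az\in\sg(A)$ and $y=A(z+m(A)\1)\in\sg_{\ge k}(A)$.
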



\item Third.
In Section \ref{kHoles}, we propose  a way to compute the $k$-holes, i.e., $\sg_{<k}(A)$, of the semigroup $\sg(A)$.
We give a natural generalization of the proof techniques used by Hemmecke et al.\cite{hemmecke}  that relies on Hilbert bases
to obtain the following theorem:

\begin{theorem}
\label{k-holes-hilb}
There exists an algorithm that computes for an integral matrix $A$ a
finite explicit representation for the set $\sg_{<k}(A)$ of $k$-holes of the
semigroup $\sg(A)$. The algorithm
computes (finitely many) vectors $h_i\in\Z^d$ and (affine) semigroups $M_i$, $i\in I$, each given
by a finite set of generators in $\Z^d$, such that
\begin{equation*}\label{General_Hemmecke}
\sg_{<k}(A)=\bigcup_{i\in I}\; \left(h_i+M_i\right).
\end{equation*}
\end{theorem}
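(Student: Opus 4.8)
The plan is to adapt the hole-computing argument of Hemmecke, Takemura and Yoshida \cite{hemmecke} (the case $k=1$) by replacing their feasibility oracle --- ``does $Ax=b$ have a nonnegative integral solution?'' --- with a \emph{counting} oracle --- ``is $\#IP_A(b)\ge k$?''. We work inside $\Qsat=\cone(A)\cap\lattice(A)$, so that $\sg_{<k}(A)=\{b\in\Qsat:\#IP_A(b)<k\}$ is the set of $k$-holes, specializing at $k=1$ to the hole set $\Qsat\setminus\sg(A)$ of \cite{hemmecke}. That a representation $\sg_{<k}(A)=\bigcup_{i\in I}(h_i+M_i)$ exists is already furnished by Theorem \ref{main2}(ii); the entire content of the statement is its \emph{effectiveness}, namely computing the finitely many $h_i\in\Z^d$ and the generators of the semigroups $M_i$.

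First I would build the counting oracle. Since $\cone(A)$ is pointed and has no zero column, the recession cone $\{x\ge 0:Ax=0\}$ is trivial, so each fiber $P_A(b)$ is a bounded polytope with finitely many lattice points. Concretely, given $b\in\Qsat$ one finds by Hermite-normal-form arithmetic a single integral (possibly negative) $y_0$ with $Ay_0=b$; then $\#IP_A(b)$ is the number of points of the affine lattice $y_0+\{z\in\Z^n:Az=0\}$ lying in $\Z^n_{\ge 0}$, a finite quantity obtainable by enumeration (or a Barvinok count). Hence the predicate $b\in\sg_{<k}(A)$ is decidable.

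Next I would reproduce the decomposition step of \cite{hemmecke}. A Hilbert-basis computation gives generators of $\Qsat$ together with the faces of $\cone(A)$. The unbounded families of $k$-holes must run along the proper faces of $\cone(A)$, since moving to infinity through the interior eventually produces $\ge k$ solutions; thus the candidate directions $M_i$ are the sub-semigroups generated by the columns that lie on a common proper face. For each such face $F$ (together with $F=\{0\}$, which accounts for the finitely many isolated $k$-holes) I would use the counting oracle to locate the base offsets $h$ for which the whole translate $h+\sg(A_F)$ is $<k$-feasible while no strictly larger translate is. The monotonicity established in Theorem \ref{main2} --- once $b$ is $\ge k$-feasible, so is every $b+A\nu$ with $\nu\ge 0$ --- makes these offsets well defined, and by the Gordan--Dickson lemma there are finitely many. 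Collecting the pairs over all faces produces the required family $(h_i,M_i)$.

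I expect the principal obstacle to be termination: confining the search for the offsets $h$ to a computable bounded region, transverse to each face, so that only finitely many candidates are examined. Here I would invoke the Frobenius-type estimates of Theorems \ref{general_Frobenius} and \ref{ideal_bounds} (applied within $\lattice(A)$, where hypothesis \eqref{assumption} can be arranged by a change of coordinates): past those explicit bounds every right-hand side is automatically $\ge k$-feasible, so no further $k$-holes arise and the enumeration halts. A second, subtler point --- the genuinely new ingredient compared with the plain feasibility argument of \cite{hemmecke} --- is the bookkeeping ensuring the finite union equals $\sg_{<k}(A)$ \emph{exactly}: because distinct faces and offsets may yield overlapping translates, one must combine the Hilbert-basis description of $\Qsat$ with an inclusion--exclusion over faces to exclude both spurious and missing vectors.
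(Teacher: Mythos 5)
Your reduction of the problem to effectiveness is correct, your counting oracle is sound, and your observation that infinite families of $k$-holes can only recede along proper faces of $\cone(A)$ is true; but the two steps you yourself flag as obstacles are precisely where the proposal breaks down, and neither is resolved. First, your algorithm must certify that an \emph{entire infinite} translate $h+\sg(A_F)$ consists of $k$-holes, and a point-by-point counting oracle can never do this in finite time. What is needed---and what constitutes the actual content of the paper's proof---is a finite certificate for the upward-closed set $L^k_{A,f}=\{\lambda\in\Z^n_{\ge 0}: f+A\lambda \mbox{ is } \ge k\mbox{-feasible}\}$. The paper computes its finitely many $\prec$-minimal generators by lifting to the sets $M_H\subset\Z^{(k+1)n}_{\ge 0}$ indexed by weighted orientations $H$ of the complete graph on $k$ vertices: these turn the non-linear distinctness requirement $\mu_i\ne\mu_j$ into the linear constraints $(\mu_i)_{w(e)}\le(\mu_j)_{w(e)}-1$, so that the minimal solutions become computable by Hilbert-basis methods for linear Diophantine systems, and Lemma \ref{proj} shows that projecting the minimal elements of $M_H$ recovers the minimal elements of each piece $L_H$ of $L^k_{A,f}$. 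Once these generators are known, the $k$-holes inside $f+A\Z^n_{\ge 0}$ are exactly the images of the standard monomials of the ideal $I^k_{A,f}$, which form a computable finite union of translated coordinate subspaces. Your proposal contains no substitute for this linearization step, and the Gordan--Dickson lemma you invoke gives finiteness of the minimal offsets but no procedure to find them.

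Second, your termination argument is partly circular and otherwise insufficient. Theorem \ref{ideal_bounds} cannot be used: its bound is expressed through $m(A)$, which is defined from a generating set of the ideal $I^k(A)$---the very object your algorithm is trying to compute. Theorem \ref{general_Frobenius} is not circular, but it only guarantees $\ge k$-feasibility of lattice points deep inside a translated cone; its complement is an \emph{unbounded} neighborhood of the boundary of $\cone(A)$, so it does not confine your offsets $h$ to a bounded region transverse to a face without a further recursive argument over the face lattice (offsets for one face may themselves be unbounded along smaller faces), which you do not supply. The paper sidesteps all of this with Lemma \ref{zonotope}: the \emph{fundamental} $k$-holes---those from which no element of $\sg(A)$ can be subtracted while remaining a $k$-hole---all lie in the bounded half-open zonotope $\{A\lambda:\lambda\in[0,1)^n\}$, every $k$-hole is a fundamental one plus $A\lambda$ for some $\lambda\in\Z^n_{\ge 0}$, and the face structure of $\cone(A)$ never enters the argument. (A minor point: the final ``inclusion--exclusion'' you call for is unnecessary, since the theorem only asks for a union, and overlaps among the translates $h_i+M_i$ are harmless.)
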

Here $M_i$ could be trivial, that is, $M_i=\{0\}$. In the special case $k=1$, Theorem \ref{k-holes-hilb} was proved in  \cite{hemmecke}.

\item Fourth. While it is known that computing $k$-holes is NP-hard, even for the original Frobenius number case, here we show that
for $n$ and $k$ fixed integer numbers, there is
an efficient algorithm to detect all the $\geq k$-feasible vectors $b$'s, not explicitly one by one, but rather
the entire set of  $k$-feasible vectors is encoded as a single multivariate \emph{generating function}, $\sum_{\geq k-\text{feasible}} t^b$.

\begin{theorem} \label{main1} Let  $n$ and $k$ be fixed positive integers.
Let $A \in \Z^{d \times n}$ and let $M$ be a positive integer.  
Then there is a polynomial time algorithm to compute a short sum of
rational functions $G(t)$ which efficiently represents a formal sum 

$$\sum_{b \in \sg_{\geq k} (A),  \ b \in [-M,M]^d}  t^b.$$

Moreover, from the algebraic formula, one can perform the following tasks in polynomial time:

\begin{enumerate}
\item Count how many such $b$'s are there (finite because $M$ provides a box).
\item Extract the lexicographic-smallest such $b$, $\geq k$-feasible vector.
\item Find the $\geq k$-feasible vector $b$ that maximizes the dot product $c^T b$.
\end{enumerate}
\end{theorem}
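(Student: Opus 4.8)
The plan is to express the desired formal sum as the lattice-point generating function of a set obtained by a lattice projection from a space of \emph{fixed} dimension, and then to run Barvinok's machinery of short rational functions together with the Barvinok--Woods projection theorem. The one genuinely new ingredient is an encoding of ``$\ge k$-feasibility'' as a projection: a right-hand side $b$ lies in $\sg_{\ge k}(A)$ precisely when $P_A(b)$ contains $k$ \emph{distinct} lattice points, so I would lift the problem into the variables of $k$ simultaneous solutions and then project back onto the space of right-hand sides.

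More precisely, I would work in $\Z^{nk}$, whose coordinates are grouped into blocks $x^{(1)},\dots,x^{(k)}\in\Z^{n}$, and consider the finite set $S\subseteq\Z^{nk}$ of all tuples satisfying $x^{(i)}\ge 0$ for every $i$, $Ax^{(i)}=Ax^{(1)}$ for $2\le i\le k$, $-M\mathbf{1}\le Ax^{(1)}\le M\mathbf{1}$, and $x^{(1)}\prec_{\mathrm{lex}}x^{(2)}\prec_{\mathrm{lex}}\cdots\prec_{\mathrm{lex}}x^{(k)}$. Because $\cone(A)$ is pointed, each polyhedron $P_A(b)$ is bounded, so $S$ is finite. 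The strict lexicographic chain forces the $k$ solutions to be pairwise distinct, and conversely any $b$ with at least $k$ solutions yields such a tuple by ordering its solutions and keeping the first $k$; hence the linear map $\pi\colon\Z^{nk}\to\Z^{d}$, $\pi(x^{(1)},\dots,x^{(k)})=Ax^{(1)}$, satisfies $\pi(S)=\sg_{\ge k}(A)\cap[-M,M]^{d}$, so it suffices to compute the generating function of this image.

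Next I would compute a short rational generating function for $S$ and then push it forward. The lexicographic chain decomposes $S$ into pairwise disjoint pieces indexed by the first coordinate of disagreement of each consecutive pair; there are at most $n^{k-1}$ such pieces, a constant since $n$ and $k$ are fixed, and each is the set of integer points of a single rational polytope in the fixed dimension $nk$ cut out by $O(d)$ facets. Barvinok's algorithm therefore produces a short generating function for each piece in polynomial time, and summing over the disjoint pieces yields a short rational function $g(S;z)$. Applying the Barvinok--Woods projection theorem to $\pi$ then returns, in polynomial time, the image generating function $G(t)=\sum_{b\in\sg_{\ge k}(A)\cap[-M,M]^{d}}t^{b}$ (each $b$ with coefficient $1$). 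Here it is essential that the \emph{source} dimension $nk$ is fixed; the target dimension $d$ is unbounded, but this is harmless because $\mathrm{rank}(A)\le n$, so $\pi(S)$ spans a sublattice of rank at most $n$.

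Finally, the three extraction tasks are standard polynomial-time operations on short rational functions in fixed effective dimension. For (a) I would evaluate $G$ at $t=\mathbf{1}$ using Barvinok's residue/specialization procedure, which resolves the spurious pole there and returns the cardinality. For (c) I would substitute $t_i\mapsto\tau^{c_i}$ to collapse $G$ to a univariate $\hat G(\tau)=\sum_s N(s)\,\tau^{s}$, where $N(s)$ counts the $b$ with $c^{T}b=s$, read off the largest exponent with nonzero coefficient as $\max c^{T}b$, and recover a maximizer by intersecting with the hyperplane $c^{T}b=\mathrm{opt}$ and extracting any point. Task (b) is the iterated-minimization special case: find $\beta_1=\min\{b_1\}$, slice $G$ by the Hadamard product with the generating function of $\{b_1=\beta_1\}$, and recurse on $b_2,\dots,b_d$. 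I expect the main obstacle to be conceptual rather than computational: getting the lifting right so that distinctness of the $k$ solutions is encoded compactly (via the lexicographic chain, yielding only constantly many polyhedral pieces) and confirming that the projection theorem applies with the source dimension fixed while $d$ stays unbounded; once these points are settled, the remaining steps are routine applications of the generating-function toolkit.
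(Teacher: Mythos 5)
Your overall architecture --- lift to $k$-tuples $(x^{(1)},\dots,x^{(k)})\in\Z^{nk}$ with $Ax^{(i)}=Ax^{(1)}$, force the $k$ solutions to be distinct via strict linear inequalities, and project by $(x^{(1)},\dots,x^{(k)})\mapsto Ax^{(1)}$ --- is exactly the paper's strategy, and your handling of tasks (a) and (c) matches the paper's (specialization at $t=\mathbf 1$, substitution $t_i\mapsto\tau^{c_i}$). The difference lies in how distinctness is encoded, and that is where your argument has a genuine gap. Your lifted set $S$ is a \emph{disjoint union} of the lattice points of up to $n^{k-1}$ polytopes (one per lex-disagreement pattern), and you then ``apply the Barvinok--Woods projection theorem'' to the rational function $g(S;z)$. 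But the projection theorem (Lemma \ref{project}, i.e.\ Theorem 1.7 of Barvinok--Woods) takes as input a \emph{single rational polytope}, not an arbitrary finite set presented by a short rational function; no polynomial-time projection procedure for general short rational functions is available. Nor can you instead project the pieces one by one and add the results: the pieces are disjoint in $\Z^{nk}$, but their projections overlap --- any $b$ with \emph{more} than $k$ solutions admits chains with several different disagreement patterns --- so the sum of the projected generating functions counts such $b$ with multiplicity greater than $1$, which destroys precisely the ``each $b$ with coefficient $1$'' property the theorem needs.

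The gap is fixable with tools you already use elsewhere: apply Lemma \ref{project} to each of the constantly many pieces separately (legitimate, since each piece is one polytope in the fixed dimension $nk$), then merge the at most $n^{k-1}$ projected sets with the Barvinok--Woods Boolean-operations (union) machinery; this stays polynomial because the number of pieces is a constant and each projected rational function has a constant number of binomial factors in its denominators, so the Hadamard products run over auxiliary polyhedra of fixed dimension even though $d$ is not fixed. The paper avoids the issue altogether with a trick worth knowing: it computes by $n$ linear programs a bound $R$ so that all solutions for all $b\in[-M,M]^d$ lie in $[0,R]^n$, and then uses the moment-curve vector $\bar c_R=(1,s,s^2,\dots,s^{n-1})$ with $s=R+2$, which by the Cauchy root bound takes distinct values at distinct lattice points of that box. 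Distinctness then becomes the single chain of inequalities $\bar c_R^\intercal X_i\ge \bar c_R^\intercal X_{i+1}+1$, so the lifted set is one polytope $Q(A,k,R,M)$ and a single application of the projection theorem suffices. A secondary remark: your iterated-slicing plan for task (b) takes Hadamard products with slice generating functions having roughly $d$ denominator factors, which is not obviously polynomial when $d$ is unbounded; the paper instead invokes a monomial-extraction lemma for short rational functions at this step.
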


Let us explain a bit the philosophy of such a theorem using generating functions for those not familiar with this point of view:
In 1993 A. Barvinok \cite{bar} gave an algorithm for counting the lattice points inside a polyhedron
$P$ in  polynomial time when the dimension of $P$ is a constant.
The input of the algorithm is the inequality description of $P$, the output is a
polynomial-size formula for the multivariate generating function of all lattice points in $P$,
namely $f(P)=\sum_{a \in P \cap \Z ^n} x^a$, where $x^a$ is an abbreviation of $x_1^{a_1} x_2^{a_2}\dots x_n^{a_n}$.
Hence, a long polynomial with exponentially many monomials  is encoded as a much shorter sum of rational functions
of the form

\begin{equation}
\label{barvinokseries}
f(P) \quad
= \quad
\sum_{i \in I} \pm \frac{x^{u_i}}{(1-x^{c_{1,i}})(1-x^{c_{2,i}})\dots
(1-x^{c_{s,i}})}.
\end{equation}

Later on Barvinok and Woods \cite{newbar} developed a set of powerful manipulation rules 
for using these short rational functions in Boolean constructions on various sets of lattice points, as well as a way to recover the
lattice points inside the image of a linear projection of a convex polytope. 


It must be remarked that from the results of Barvinok \cite{bar} for fixed $n$, but not necessarily fixed $k$,
one can decide whether a particular $b$ is  $k$-feasible in polynomial time. Recently Eisenbrand and H\"anhle \cite{eisenbrandhaenhle2013}
showed that the problem of finding the right-hand-side vector $b$ that maximizes the number of lattice points solutions, when $b$
is restricted to take values in a polyhedron, is NP-hard.  

%
%
%

One can prove a nice theorem for the computation of the $k$-Frobenius number  as  a corollary of Theorem \ref{main1}.
Recall that given a vector $(a_1,\dots, a_n)$ and a positive integer $b$ a \emph{knapsack problem} is a linear Diophantine problem of the form
$a_1x_1+\dots +a_nx_n=b$ with $x_i \geq 0$. The question about finding the $k$-Frobenius number is a query over a parametric family of
knapsack problems.

\begin{corollary} \label{k-frobeniuscomp} Let $n,k$ be two fixed positive integers.
Consider the parametric knapsack problem $a^T x=b$, $x\ge 0$ associated with the vector $a=(a_1,\dots,a_n)^T\in\Z_{>0}^n$ with
$\gcd(a_1,\dots,a_n)=1$. Then the $k$-Frobenius number $F_k(a)$ can be computed in polynomial time.
\end{corollary}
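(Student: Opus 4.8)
The plan is to recognize $F_k(a)$ as the largest exponent appearing in a short rational function that encodes the $<k$-feasible right-hand sides lying in a suitable box, and then to read off that exponent with the extraction tools behind Theorem~\ref{main1}. The whole difficulty is that $F_k(a)$ can be exponentially large in the input size $L=\sum_i \log a_i$, so a direct search is hopeless; everything hinges on keeping all objects of polynomial bit-size.

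First I would fix the box. For $A=a^T$ we are in the case $d=1$, which is constant together with $n$, and the hypotheses of Theorem~\ref{general_Frobenius} hold: condition (i) is exactly $\gcd(a_1,\dots,a_n)=1$, and condition (ii) holds because $a\in\Z_{>0}^n$ forces $\{x\ge 0: a^Tx=0\}=\{0\}$. Specializing the bound \eqref{general_via_A} to $d=1$, where $\det(AA^T)=\|a\|^2=\sum_i a_i^2$, yields an explicit integer $M\ge F_k(a)$. The key point is that although the \emph{value} of $F_k(a)$ may be exponential in $L$, the number $M$ is only $O(\|a\|^2)$ in value, so $\log M=O(L)$: that is, $M$ has polynomially many bits. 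Consequently the box $[0,M]$ of polynomial bit-length contains all of $\sg_{<k}(A)$, since by definition every $<k$-feasible $b$ satisfies $0\le b\le F_k(a)\le M$.

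Next I would obtain a short rational function for the $<k$-feasible elements in this box by complementation. Because $a>0$, every integer in $\{0,1,\dots,M\}$ is either $\ge k$-feasible or $<k$-feasible, and all elements of $\sg_{\ge k}(A)$ are nonnegative, so
$$
\sum_{\substack{b\in \sg_{<k}(A)\\ 0\le b\le M}} t^b \;=\; \frac{1-t^{M+1}}{1-t}\;-\;\sum_{\substack{b\in \sg_{\ge k}(A)\\ 0\le b\le M}} t^b .
$$
The first term is a short rational function whose only large datum is the exponent $M+1$, of polynomial bit-size; the second term is computable in polynomial time for fixed $n,k$ by Theorem~\ref{main1}. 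Their difference is again a short sum of rational functions representing $\sg_{<k}(A)\cap[0,M]$, produced in polynomial time.

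Finally, $F_k(a)$ is exactly the largest exponent $b$ occurring in this generating function, and I would extract it with the same Barvinok--Woods maximization primitive used in part (c) of Theorem~\ref{main1}, now applied to maximize the linear form $b\mapsto b$ over the support of the short rational function for $\sg_{<k}(A)\cap[0,M]$. (Concretely, one can instead binary search a threshold $\tau\in[0,M]$, intersecting with $\{b\ge\tau\}$ and counting in polynomial time via Theorem~\ref{main1}; the $O(\log M)=O(L)$ rounds keep the procedure polynomial.) The main obstacle is precisely this bit-size bookkeeping: one must guarantee that the bound $M$, the complement term $\tfrac{1-t^{M+1}}{1-t}$, and the top-exponent extraction can all be handled symbolically on the rational function without ever expanding a polynomial of exponentially many terms; the bound of Theorem~\ref{general_Frobenius} is exactly what makes this possible.
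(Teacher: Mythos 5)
Your proposal is correct and follows essentially the same route as the paper's own proof: fix an upper bound $M$ on $F_k(a)$ of polynomial bit-size (the paper takes $M=k(n-1)!\,a_1\cdots a_n$ from Theorem 1.1 of \cite{AFH}, you take the bound of Theorem~\ref{general_Frobenius}; the two play an identical role), encode the $\geq k$-feasible right-hand sides in the box via Theorem~\ref{main1}, pass to the $<k$-feasible set by subtracting from a geometric series, and extract the largest exponent with the Barvinok--Woods extraction lemma (Lemma~\ref{extractmonomial}). Your only deviation is cosmetic but slightly tidier: you complement against the truncated series $(1-t^{M+1})/(1-t)$ rather than the paper's $(1-t)^{-1}-f(t)$, which keeps the support finite as Lemma~\ref{extractmonomial} formally requires.
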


Corollary \ref{k-frobeniuscomp} greatly generalizes a similar celebrated theorem of R. Kannan \cite{Kannan} and extends
the proof of Barvinok and Woods for $k=1$. Section \ref{sectionBarvinokApplications} gives proofs of Theorem \ref{main1} and its Corollary \ref{k-frobeniuscomp}.

\item The fifth contribution of our article is concerned with practical computations of $=k$-feasible knapsack problems and $k$-Frobenius problems.
Given $a=(a_1,\ldots,a_n)\in \Z^n_{>0}$ with $\gcd(a_1, \ldots, a_n)=1$, let $g_k=g_k(a)$ denote the largest positive integer $b$
such that $a_1x_1+\cdots+a_nx_n=b$ has {\em exactly} $k$  nonnegative integral solutions
if such $b$ exists and zero otherwise. The numbers $g_k$ were studied by many authors, see e. g. \cite{BeckRobins:extension}, \cite{BeckKifer:extension}, \cite{brownetal} and \cite{shallit2011}.
In Section \ref{exactly_k} we prove the inequality $g_0<g_1$ for $n=3$. This answers in affirmative a question proposed in \cite{brownetal}.


\item
From Corollary \ref{k-frobeniuscomp} one further can ask: What is the computational complexity of computing the $k$-Frobenius number when the
dimension $n$ is fixed but $k$ is part of the input? On the basis of the results of  \cite{eisenbrandhaenhle2013} we suspect that this is an NP-hard problem,
but we do not know of the answer. It is then of interest to experiment with the values of $\frob_k (a)$ to see the growth for fixed values of $n$ and possibly predict
a formula. The sixth and final contribution of our article is about practical computation and experimental exploration on the behavior of the $k$-Frobenius numbers.
In Section \ref{kfrobcompute} we give an algorithm for fast practical computation of the $k$-Frobenius numbers 
using ideas from dynamic programming. Our experiments with knapsacks of three variables ($n=3$) provided support for a new conjecture on the asymptotic properties
of the average value of the $k$-Frobenius numbers.


\end{enumerate}

In what follows we assume that the reader is familiar with polyhedral convexity, monomial ideals, toric ideals, semigroup rings, linear Diophantine equations, and Gr\"obner
bases  as presented in \cite{newbar} and \cite{coxlittleoshea,sturmfels}.


\section{Proof of Theorem \ref{main2}: Monomial ideals and $\geq k$-feasibility}
\label{sectionMonomialIdeal}

%
%
%

For $f\in \cone(A)\cap \Z^d$ define
\begin{equation*}
L^k_{A,f}=\{\lambda\in \Z^n_{\ge 0}: IP_A(f+A\lambda) \mbox{ is }\ge k\mbox{ feasible}\}\,,
\end{equation*}
so that $ \sg_{\ge k}(A)=\{ A\lambda: \lambda \in L^k_{A,0}\} $.
Define then the monomial ideal $I^k(A)$ as follows (with its set of exponent vectors denoted by $E^k(A)$).
\begin{equation*}
I^k(A):=\langle x^\lambda: \lambda \in L^k_{A,0}\rangle\,.
\end{equation*}
To see that  the equation $\sg_{\ge k}(A)= \{A\lambda: \lambda \in E^k(A)\}$  is satisfied it is enough to check that
for any $\lambda_0\in L^k_{A,0}$ the inclusion
$
\lambda_0+\Z^n_{\ge 0}\subset L^k_{A,0}
$
holds. We will prove the following more general statement.
\begin{lemma}\label{staircase_inclusion}
For any $f \in \cone(A)\cap \Z^d$ and $\lambda_0\in L^k_{A,f}$ we have the inclusion
\begin{equation}
\lambda_0+\Z^n_{\ge 0}\subset L^k_{A,f}\,.
\label{shift}
\end{equation}
\end{lemma}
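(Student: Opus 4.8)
The plan is to show the inclusion (\ref{shift}) by a monotonicity argument: adding a nonnegative integer vector to $\lambda_0$ can only increase the number of lattice points in the associated parametric polyhedron. Since $L^k_{A,f}$ records exactly those $\lambda$ for which $IP_A(f+A\lambda)$ has at least $k$ points, it suffices to prove that the count $\#\,IP_A(f+A\lambda)$ is nondecreasing as $\lambda$ moves up in the coordinate-wise partial order on $\Z^n_{\ge 0}$. Because $\Z^n_{\ge 0}$ is generated as a monoid by the standard basis vectors $e_1,\ldots,e_n$, it is enough to treat a single step $\lambda_0 \mapsto \lambda_0 + e_j$ and then iterate.

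First I would fix $\lambda_0 \in L^k_{A,f}$ and an index $j$, and set $b = f + A\lambda_0$ and $b' = f + A(\lambda_0 + e_j) = b + a_j$, where $a_j$ is the $j$-th column of $A$. The key step is to exhibit an injection from $IP_A(b)$ into $IP_A(b')$. The natural candidate is the translation map $\mu \mapsto \mu + e_j$. I would check that this is well defined: if $\mu \in \Z^n_{\ge 0}$ satisfies $A\mu = b$, then $\mu + e_j \in \Z^n_{\ge 0}$ (nonnegativity is preserved since we only add a unit to one coordinate) and $A(\mu + e_j) = b + a_j = b'$, so $\mu + e_j \in IP_A(b')$. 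Injectivity is immediate because translation by a fixed vector is a bijection of $\Z^n$. Hence $\#\,IP_A(b) \le \#\,IP_A(b')$, so $b$ being $\ge k$-feasible forces $b'$ to be $\ge k$-feasible, i.e.\ $\lambda_0 + e_j \in L^k_{A,f}$.

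Iterating the single-step inclusion along the generators $e_1,\ldots,e_n$ then yields $\lambda_0 + v \in L^k_{A,f}$ for every $v \in \Z^n_{\ge 0}$, which is exactly (\ref{shift}). Formally, one writes an arbitrary $v \in \Z^n_{\ge 0}$ as $v = \sum_{j=1}^n v_j e_j$ and applies the one-step argument $v_1 + \cdots + v_n$ times, each time with $f$ held fixed and the base point advanced by one unit vector; the hypothesis $f \in \cone(A)\cap\Z^d$ is carried along unchanged since only the lattice-point count at shifted right-hand sides is used.

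The argument is essentially routine, so there is no serious obstacle; the only point requiring a little care is making sure one compares the \emph{number} of integer points correctly. I would emphasize that the translation injection never collapses distinct solutions and always lands inside the target fiber, so that at least $k$ solutions for $b$ genuinely produce at least $k$ solutions for $b'$. This monotonicity of $\#\,IP_A(f + A\lambda)$ in $\lambda$ is precisely what makes $L^k_{A,f}$ an up-set (order filter) in $\Z^n_{\ge 0}$, which in turn is what guarantees that $I^k(A)$ is a genuine monomial ideal and that (\ref{exponents}) holds.
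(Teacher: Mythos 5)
Your proof is correct and uses essentially the same idea as the paper: translating the $k$ distinct solutions by a nonnegative integer vector yields $k$ distinct solutions for the shifted right-hand side. The only difference is that you decompose the shift into unit steps $e_j$ and iterate, whereas the paper translates by the full vector $\mu$ in a single step, which is marginally more direct but mathematically identical.
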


\begin{proof} Let $\lambda_0\in L^k_{A,f}$, so that there exist $k$ distinct vectors $\lambda_1,\ldots, \lambda_{k}\in \Z^n_{\ge 0}$ with
\begin{equation*}
 f+A\lambda_0=A\lambda_1=\cdots=A\lambda_{k}\,.
\end{equation*}
Take any vector $\mu \in \Z^n_{\ge 0}$ and set $\nu=\lambda_0+\mu$.
Then, clearly, we have
\begin{equation*}
f+A\nu=A(\lambda_1+\mu)=\cdots=A(\lambda_{k}+\mu)\,,
\end{equation*}
where all vectors $\lambda_1+\mu, \ldots, \lambda_{k}+\mu\in \Z^n_{\ge 0}$ are distinct.
Consequently, $IP_A(f+A\nu)$ is $\ge k$ feasible and, thus, $\nu\in L^k_{A,f}$.
Hence (\ref{shift}) holds and the lemma is proved.
\end{proof}

Lemma \ref{staircase_inclusion} with $f=0$ clearly implies the first claim of Theorem \ref{main2}.
Let us now prove the second claim.
Recall that the elements of the set $\sg_{<k}(A)$ are also called $k$-{\em holes}. A $k$-hole $f$ is {\em fundamental} if there is no other $k$-hole $h\in\sg_{<k}(A)$
such that $f-h\in \sg_{\geq 1}(A)$. 
In other words, for any $u\in \sg(A)$
$f-u\notin \sg_{<k}(A)$.



\begin{lemma}\label{zonotope}
The set of fundamental $k$-holes is a subset of the zonotope
\begin{equation*}
P=\left\{A\lambda: \lambda \in [0,1)^n\right\}\,.
\end{equation*}
\end{lemma}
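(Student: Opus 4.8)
The plan is to take an arbitrary fundamental $k$-hole $f$, produce from its cone representation a second $k$-hole that lies in the zonotope $P$ and differs from $f$ by an element of $\sg(A)$, and then use fundamentality to force the two to coincide, thereby placing $f$ itself inside $P$.

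First I would use $f\in\cone(A)$ to write $f=A\mu$ with $\mu\in\R^n_{\ge 0}$, and split $\mu=g+\rho$ into its componentwise integer part $g=\lfloor\mu\rfloor\in\Z^n_{\ge 0}$ and fractional part $\rho:=\mu-g\in[0,1)^n$. Setting $f':=A\rho=f-Ag$, the vector $f'$ lies in $P$ by construction, while $f-f'=Ag$ is an element of $\sg(A)$. Note also that $f'$ is integral, since $f'=f-Ag$ is a difference of integer vectors, so $f'\in\cone(A)\cap\Z^d$, which is the ambient set in which $k$-holes live.

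The crucial step is to verify that $f'$ is itself a $k$-hole, i.e.\ that $IP_A(f')$ is $<k$-feasible. This is a translation argument in the same spirit as Lemma \ref{staircase_inclusion}, run in the reverse direction: if $f'$ admitted $k$ distinct nonnegative integral solutions $x_1,\ldots,x_k$ of $Ax=f'$, then $x_1+g,\ldots,x_k+g$ would be $k$ distinct nonnegative integral solutions of $Ax=f'+Ag=f$, contradicting the assumption that $f$ is a $k$-hole. Hence $IP_A(f')$ has fewer than $k$ solutions and $f'\in\sg_{<k}(A)$. I expect this to be the only real obstacle, and it is handled cleanly by the injection $x\mapsto x+g$; everything else is bookkeeping with the floor decomposition.

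Finally I would invoke fundamentality. We have exhibited a $k$-hole $f'$ with $f-f'=Ag\in\sg(A)$. Since $f$ is a \emph{fundamental} $k$-hole, there is no other $k$-hole differing from $f$ by an element of $\sg(A)$, so necessarily $f'=f$. But $f'=A\rho$ with $\rho\in[0,1)^n$, whence $f=A\rho\in P$, as claimed. I would remark that the standing assumption that $\cone(A)$ is pointed is what makes $P$ bounded, so that this containment has teeth: it implies there are only finitely many fundamental $k$-holes.
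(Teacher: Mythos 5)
Your proof is correct and is essentially the paper's own argument: both write $f=A\lambda$ with $\lambda\ge 0$, peel off the integer part of $\lambda$ to produce a second $k$-hole differing from $f$ by an element of $\sg(A)$ (using the injection $x\mapsto x+g$ on solution sets), and then invoke fundamentality. The only difference is cosmetic: the paper argues by contradiction, subtracting a single column $A_j$ when some $\lambda_j\ge 1$, whereas you subtract the whole integer part $Ag$ at once and conclude directly.
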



\begin{proof} Let $f\in \sg_{<k}(A)$ be a fundamental hole. We can write
\begin{equation*}
f=A\lambda\,,\; \lambda\in \Q^n_{\ge 0}\,.
\end{equation*}
Suppose $f\notin P$. Then for some $j$ we must have $\lambda_j\ge 1$. Thus, denoting by $A_j$  the $j$th column vector of $A$,
the element $f'=f-A_j$ is a $k$-hole as any $k$ distinct solutions for $IP_A(f')$ would correspond to $k$ distinct solutions for $IP_A(f)$.
Thus we get a contradiction with our choice of $f$ as a fundamental $k$-hole. This implies $\lambda_j<1$ for all $j$ and, consequently,
$f\in P$. The lemma is proved.
\end{proof}

Lemma \ref{zonotope} shows, in particular, that the number of fundamental $k$-holes is finite.
Let us fix a fundamental $k$-hole $f$. If the set $L^k_{A,f}$ is empty then $f+A\lambda$ is a $k$-hole for all $\lambda\in \Z^n_{\ge 0}$. Assume now that $L^k_{A,f}$ is not empty and consider the monomial ideal $I^k_{A,f}\subset {\mathbb Q}[x_1,\ldots, x_n]$ defined as
\begin{equation*}
I^k_{A,f}=\langle x^\lambda: \lambda\in L^k_{A,f} \rangle\,.
\end{equation*}
Then, in view of (\ref{shift}), $f+A\lambda$ is not a $k$-hole if and only if $x^\lambda \in I^k_{A,f}$.

Thus we need to write down the set $C(I^k_{A,f})$ of exponents of {\em standard monomials} (the monomials not in the ideal) for  $I^k_{A,f}$.
Any such exponent $\lambda\in C(I^k_{A,f})$ corresponds to the $k$-hole $f+A\lambda$.

By Theorem 3 in Chapter 9 of \cite{coxlittleoshea}, the set $C(I^k_{A,f})$ can be written as a finite union of translates of coordinate subspaces of $\Z^n_{\ge 0}$.
Since the number of fundamental $k$-holes is finite, the second claim of Theorem \ref{main2} is proved.

\section{Asymptotic structure of $\sg_{\ge k}(A)$}
\label{Asymptotic}

In this section we assume that $A\in\Z^{d\times n}$, $1\leq d<n$, is an integral
$d\times n$ matrix satisfying
\begin{equation}
\begin{split}
{\rm i)}&\,\, \gcd\left(\det(A_{I_d}) : A_{I_d}\text{ is an $d\times
    d$ minor of }A\right)=1, \\
{\rm ii)}&\,\, \{x\in\R^n_{\ge 0}: A\,x=0\}=\{0\}\,.
\end{split}
\label{assumption}
\end{equation}
In the important
special case $d=1$ the matrix $A=a^T$ is just a row vector with $a=(a_1,\ldots, a_n)^T\in\Z^n$ and  \eqref{assumption} i) says that
 $\gcd(a_1,
\ldots, a_n)=1$. Due to the second assumption \eqref{assumption} ii)
we may assume that all entries of the vector $a$ are positive.
It follows that the largest integral value $b$ such that the problem $IP_A(b)$ is
$<k$-feasible, the $k$-Frobenius number $\frob_k(a)$, is well-defined. Clearly, for $d=1$ we have the inclusion
%
\begin{equation}
\interior(\frob_k(a)+\R_{\ge 0})\cap\Z\subset \sg_{\ge k}(a^T)\,,
\label{eq:frob_cone}
\end{equation}
where $\interior(\cdot)$ denotes the interior of the set.

In general, the structure of the set $\sg_{\ge k}(A)$, apart from a few special cases, is not well understood. It is known that, in analogy with (\ref{eq:frob_cone}), $\sg_{\ge k}(A)$ can be decomposed into the set of all integer points in the interior of a certain translated cone and a complex complementary set.
More recent results  (see \cite{AHL}) attempt to estimate the location of such a cone along the fixed direction  $v=A\,\1$, where $\1$ is the all-$1$-vector, in the interior of $\cone(A)$.
The choice of $v$ as the direction vector is dated back to the paper of Khovanskii \cite{Khovanskii1} for $k=1$.
In general, given any rational vector $u$ in the interior of $\cone(A)$, the multiple $t u$ will, for large enough $t\in \Z$, lie in  $\sg_{\ge k}(A)$. Hence the name {\em asymptotic structure}.

In this paper we consider a generalization of the Frobenius number that  reflects $\ge k$-feasibility
properties of the whole family of the problems $IP_A(b)$, when $b$ runs over all integer vectors in the interior of the cone $\cone(A)$.
Given a direction vector $b\in \interior(\cone(A))\cap\Z^d$ put
\begin{equation*}
\dfrob_k(A,b)=\min\{t\geq 0 : \interior(tb+\cone(A))\cap \Z^d \subset  \sg_{\ge k}(A)\}.
\end{equation*}
%
%
We define the $k$-{\em Frobenius number associated with} $A$ as
\begin{equation*}\begin{split}
\frob_k(A)=\max\{\dfrob_k(A,b): b\in \interior(\cone(A))\cap \Z^d\}\,.
%
\end{split}
\end{equation*}
%
%
%

The number $\dfrob_k(A)=\dfrob_k(A,v)$ was called in \cite{AHL} the {\em diagonal $k$-Frobenius number of} $A$.
A lower bound for $\dfrob_k(A)$ (and thus, by definition of $\dfrob_k(A)$, for $\frob_k(A)$) was given in \cite[Theorem 1.3]{AHL}.
Next we  derive the lower and upper bounds for $\frob_k(A)$ presented in Theorem \ref{general_Frobenius}  and Theorem \ref{ideal_bounds}.

Before we start the proofs it is worth remarking they will be based on the results obtained in \cite{AH}, \cite{AHL} and the structural Theorem \ref{main2}.

\section{Proof of Theorem \ref{general_Frobenius}}
\label{proof_general_Frobenius}

First we show that the $k$-Frobenius number $\frob_k(A)$ is bounded from above by the (suitably normalized) number $\dfrob_k(A)$.

\begin{lemma}
\begin{equation}
\frob_k(A)\le \left(\frac{{\det(AA^T)}}{n-d+1}\right)^{1/2}\,\dfrob_k(A)\,.
\label{general_via_diagonal}
\end{equation}
\label{lemma_general_via_diagonal}
\end{lemma}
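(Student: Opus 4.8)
The plan is to bound $\frob_k(A)=\max_b\dfrob_k(A,b)$ by comparing an \emph{arbitrary} interior lattice direction $b$ against the distinguished Khovanskii direction $v=A\1$, for which $\dfrob_k(A)=\dfrob_k(A,v)$ is the controlling quantity. Write $g=\dfrob_k(A)$. By the very definition of the diagonal Frobenius number we have the inclusion
\begin{equation*}
\interior(g\,v+\cone(A))\cap\Z^d\subset \sg_{\ge k}(A)\,,
\end{equation*}
and $g$ is the smallest multiple with this property. So the first step is to reduce the statement to a purely convex-geometric comparison between the translated cones $t\,b+\cone(A)$ and $g\,v+\cone(A)$.

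First I would observe that for any $b\in\interior(\cone(A))\cap\Z^d$ and any $t\ge 0$ with $t\,b-g\,v\in\cone(A)$ one has $\interior(t\,b+\cone(A))\subset\interior(g\,v+\cone(A))$, since $\cone(A)+\cone(A)=\cone(A)$; hence $\interior(t\,b+\cone(A))\cap\Z^d\subset\sg_{\ge k}(A)$ and therefore $\dfrob_k(A,b)\le t$. Taking the smallest admissible $t$ gives
\begin{equation*}
\dfrob_k(A,b)\le g\cdot\sigma(b),\qquad \sigma(b):=\min\{s\ge 0: s\,b-v\in\cone(A)\}\,,
\end{equation*}
and maximizing over $b$ yields $\frob_k(A)\le g\cdot\sup_b\sigma(b)$. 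Geometrically $\sigma(b)$ is the parameter at which the ray $\R_{\ge 0}b$ enters the shifted cone $v+\cone(A)$; since $b$ and $v$ both lie in the interior of the pointed cone this is finite, and (crucially) taking $b$ to range only over \emph{integer} interior directions keeps the supremum finite even though the analogous supremum over real directions blows up as $b$ approaches $\partial\cone(A)$. One checks the supremum is attained at a vector of small norm, e.g. at $b=\1$'s image when $d=1$.

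The heart of the proof, and the main obstacle, is then the quantitative estimate
\begin{equation*}
\sup_b\sigma(b)\le\Big(\tfrac{\det(AA^T)}{\,n-d+1\,}\Big)^{1/2}\,,
\end{equation*}
which is where the special form $v=A\1$ and the Gram determinant $\det(AA^T)$ must be exploited. Rewriting $s\,b-v\in\cone(A)$ as $s\,b=A\eta$ with $\eta\ge\1$, the quantity $\sigma(b)$ measures how cheaply the ray through $b$ can be realized as a non-negative combination that dominates the all-ones combination $A\1$. I would control this by pairing against a suitable supporting functional of $\cone(A)$ and invoking a Cauchy--Schwarz/volume estimate that ties the length of $v=A\1$ and the cross-sectional geometry of $\cone(A)$ to $\det(AA^T)$ and to the count $n-d+1$; this is exactly the kind of inequality established for the diagonal direction in \cite{AH,AHL}, which I would cite for the arithmetic step. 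Combining this with the reduction above produces the claimed bound $\frob_k(A)\le(\det(AA^T)/(n-d+1))^{1/2}\,\dfrob_k(A)$.

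I expect the genuinely delicate point to be this last estimate rather than the reduction: the passage $\frob_k(A)\le g\sup_b\sigma(b)$ is soft convexity and upward-closedness of $\sg_{\ge k}(A)$ under $\sg(A)$ (Lemma~\ref{staircase_inclusion}), but pinning the supremum of $\sigma(b)$ over all interior \emph{lattice} directions to precisely the lattice quantity $(\det(AA^T)/(n-d+1))^{1/2}$ is where the fine arithmetic of $A\Z^n$ and the prior bounds of \cite{AH,AHL} do the real work.
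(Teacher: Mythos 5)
Your proposal is correct and follows essentially the same route as the paper: reduce to the claim that for every $b\in\interior(\cone(A))\cap\Z^d$ the point $\left(\det(AA^T)/(n-d+1)\right)^{1/2}b$ lies in $v+\cone(A)$ (your $\sup_b\sigma(b)\le\gamma$), then use cone additivity and the definition of $\dfrob_k(A)=\dfrob_k(A,v)$ to get $\dfrob_k(A,b)\le\gamma\,\dfrob_k(A)$ for all $b$. The key quantitative estimate you defer to \cite{AH,AHL} is exactly what the paper also imports, namely from the proof of Lemma 1.1 in \cite{AH}.
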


\begin{proof}
Put for convenience $\gamma= \left(\frac{{\det(AA^T)}}{n-d+1}\right)^{1/2}$. As it was shown in the proof of Lemma 1.1 in \cite{AH}, for any $b\in \interior(\cone(A)) \cap\Z^d$ the vector $\gamma  b$
is contained in $v+\cone(A)$. Therefore $\gamma b+\cone(A)\subset v+\cone(A)$ and, consequently,
\begin{equation*}
\interior(\dfrob_k(A)\gamma b+\cone(A))\cap \Z^n \subset \interior(\dfrob_k(A)v+\cone(A))\cap \Z^n \subset \sg_{\ge k}(A)\,.
\end{equation*}
Hence for any $b\in \interior(\cone(A)) \cap\Z^d$ we have $\dfrob_k(A,b) \le \dfrob_k(A)\gamma$. Therefore $\frob_k(A)\le \dfrob_k(A)\gamma$ and the lemma is proved.
\end{proof}
The diagonal $k$-Frobenius number $\dfrob_k(A)$ in its turn is bounded from above in terms of $A$ and $k$ due to the following result.
\begin{theorem}[Theorem 1.2 in \cite{AHL}] The diagonal $k$-Frobenius number associated with $A$ satisfies the inequality
\begin{equation}
\dfrob_k(A)\leq
\frac{n-d}{2}(\det(AA^T))^{1/2} + \frac{(k-1)^{1/(n-d)}}{2}\,\left(\det(AA^T)\right)^{1/(2(n-d))}. \label{upper_bound_for_dFN}
\end{equation}
\label{upper_bound_AHL}
\end{theorem}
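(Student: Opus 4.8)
The plan is to translate the defining condition of $\dfrob_k(A)$ into a purely lattice-geometric covering problem for the kernel lattice of $A$, and then to estimate the relevant covering radius. Set $m=n-d$ and let $\Lambda=\{y\in\Z^n: Ay=0\}$ be the kernel lattice, of rank $m$. Assumption \eqref{assumption} i) is equivalent to $A\Z^n=\Z^d$, so every integral vector $b$ admits an integral (not necessarily nonnegative) preimage; consequently the integer points of any fiber $P_A(b)$, when the affine fiber meets $\Z^n$, form a single coset $x^\ast+\Lambda$. Assumption \eqref{assumption} ii) says $\ker_{\R}A\cap\R^n_{\ge 0}=\{0\}$, which guarantees that the polytope $S_0:=\{y\in\ker_{\R}A: y\ge -\1\}$ is bounded. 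A standard geometry-of-numbers identity, using that the minor-gcd equals $1$, gives $\det(\Lambda)=(\det(AA^T))^{1/2}$, and I would record this at the outset since the target bound is phrased through $\det(AA^T)$.

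Next I would carry out the reduction. Fix $t\ge 0$ and $b\in\interior(tv+\cone(A))\cap\Z^d$ with $v=A\1$. Since $tv+\cone(A)=\{A(t\1+x):x\ge 0\}$, any such $b$ has a real solution $w$ to $Aw=b$ with $w\ge t\1$ componentwise; hence the nonnegativity region of the affine fiber contains the translated, dilated body $w+tS_0$, because $y\in\ker_{\R}A$ with $y\ge -t\1$ forces $w+y\ge 0$. Identifying the affine fiber with $\ker_{\R}A$ by subtracting $w$, the integer points of $P_A(b)$ become a translate $\Lambda''=(x^\ast-w)+\Lambda$ of $\Lambda$, and the $\ge k$-feasibility of $P_A(b)$ is implied by $|\Lambda''\cap tS_0|\ge k$. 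As the translation is not under our control, it suffices to require that every translate of $tS_0$ contain at least $k$ points of $\Lambda$. Define accordingly the $k$-fold covering radius
\[\mu_k=\inf\{t\ge 0: |(z+tS_0)\cap\Lambda|\ge k\ \text{ for all }z\in\ker_{\R}A\}.\]
The reduction then yields $\dfrob_k(A)\le\mu_k$, so the whole task becomes to bound $\mu_k$ from above.

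To bound $\mu_k$ I would separate the $k=1$ contribution from the $k$-dependent growth. For the convex set $S_0$ and nonnegative scalars the Minkowski identity $\alpha S_0+\beta S_0=(\alpha+\beta)S_0$ holds, which gives the subadditive estimate $\mu_k\le\mu_1+\sigma_k$, where $\mu_1$ is the ordinary covering radius of $\Lambda$ with respect to $S_0$ and $\sigma_k$ is the least scale for which $\sigma_k S_0$ already contains at least $k$ lattice points of $\Lambda$. Indeed, for any $z$ the body $z+\mu_1 S_0$ contains some $p_0\in\Lambda$, whence $z+(\mu_1+\sigma_k)S_0\supseteq p_0+\sigma_k S_0$ contains the $\ge k$ distinct translates by $p_0$ of the points of $\sigma_k S_0\cap\Lambda$. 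I would then estimate $\mu_1$ by the sharp covering/flatness inequalities for this simplex-like body from \cite{AH}, which should supply the first, $k$-independent, summand $\tfrac{n-d}{2}(\det(AA^T))^{1/2}$; and estimate $\sigma_k$ by a volume count, since $\sigma_k S_0$ contains of order $\mathrm{vol}(S_0)\,\sigma_k^{\,n-d}/\det(\Lambda)$ lattice points, so forcing $k$ of them makes $\sigma_k$ scale as $(k-1)^{1/(n-d)}\det(\Lambda)^{1/(n-d)}=(k-1)^{1/(n-d)}(\det(AA^T))^{1/(2(n-d))}$, the second summand of \eqref{upper_bound_for_dFN}.

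The main obstacle is extracting the precise constants rather than mere orders of magnitude. A generic covering-radius bound reproduces the correct exponents in $\det(AA^T)$ and $k$ but with suboptimal constants, whereas the factor $\tfrac{n-d}{2}$ and the clean $\tfrac12$ in \eqref{upper_bound_for_dFN} demand a sharp estimate tailored to the specific geometry of $S_0=\{y\in\ker_{\R}A: y\ge-\1\}$; I expect this delicate step to rest on the detailed lattice-point and flatness lemmas of \cite{AH} and \cite{AHL}. A secondary, more routine, difficulty is the bookkeeping in the reduction: verifying that the real witness $w\ge t\1$ can always be chosen for the relevant $b$, that $\Lambda''$ is genuinely a translate of $\Lambda$ of the same covolume, and that the $\ge k$ lattice points produced in $z+(\mu_1+\sigma_k)S_0$ remain distinct after translation by $p_0$.
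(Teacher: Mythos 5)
First, a remark on the comparison itself: the paper does not prove this statement --- it is imported verbatim as Theorem 1.2 of \cite{AHL} and used as a black box in Section \ref{proof_general_Frobenius} --- so your attempt can only be measured against the method of the cited source. On that score your reduction is sound and is in fact the right framework: under \eqref{assumption} the integer points of each fiber form a coset of the kernel lattice $\Lambda=\{y\in\Z^n:Ay=0\}$ with $\det\Lambda=(\det(AA^T))^{1/2}$, the condition $b\in tv+\cone(A)$ produces a real witness $w\ge t\1$ so that the fiber's nonnegative part contains $w+tS_0$, and $\dfrob_k(A)$ is thereby dominated by the $k$-fold (``$s$-'') covering radius of $S_0$ with respect to $\Lambda$ --- this is exactly the notion around which \cite{AHL} is built. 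The splitting $\mu_k\le\mu_1+\sigma_k$ via $\alpha S_0+\beta S_0=(\alpha+\beta)S_0$ is also correct, and the first summand of \eqref{upper_bound_for_dFN} does come from the $k=1$ covering bound of \cite{AH}, consistent with the fact that \eqref{upper_bound_for_dFN} at $k=1$ reduces to the Aliev--Henk bound.

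The genuine gap is in your estimate of $\sigma_k$. The count $\#(\sigma S_0\cap\Lambda)\gtrsim \mathrm{vol}(S_0)\,\sigma^{n-d}/\det\Lambda$ has no rigorous basis for a general convex body: volume lower bounds for lattice-point counts (van der Corput/Minkowski) require central symmetry, and $S_0=\{y\in\ker_{\R}A: y\ge-\1\}$ is a simplex-like, highly asymmetric body --- a thin convex body of arbitrarily large volume can contain few or no lattice points off a low-dimensional face, so ``forcing $k$ of them'' does not yield $\sigma_k\lesssim\bigl((k-1)\det\Lambda/\mathrm{vol}(S_0)\bigr)^{1/(n-d)}$ as stated. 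Moreover, your final formula for $\sigma_k$ silently drops the factor $\mathrm{vol}(S_0)^{-1/(n-d)}$, which amounts to assuming an unproven lower bound of the form $\mathrm{vol}(S_0)\ge 2^{\,n-d}$-type for this specific body; establishing the constants $\tfrac12$ and the term $(k-1)^{1/(n-d)}(\det(AA^T))^{1/(2(n-d))}$ requires either a symmetrization/successive-minima argument (replacing $S_0$ by $S_0\cap(-S_0)$ or the difference body, at the cost of constants one must then recover) combined with the volume and covering estimates for $S_0$ proved in \cite{AH}, or the $s$-covering-radius machinery developed in \cite{AHL} itself. You flag this yourself as ``the main obstacle,'' and that is accurate: what you have is the correct reduction and the correct shape of the bound with the right exponents, but the decisive quantitative step --- a rigorous lower bound on lattice points in dilates of the non-symmetric body $S_0$ sharp enough to give \eqref{upper_bound_for_dFN} --- is absent, so the proposal is an outline of the cited proof rather than a proof.
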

Combining (\ref{general_via_diagonal}) and (\ref{upper_bound_for_dFN}) we obtain the inequality (\ref{general_via_A}).

\section{Proof of Theorem \ref{ideal_bounds}}
\label{proof_ideal_bounds}


%
%

The set of exponents $E^k(A)$ of the monomial ideal $I^k(A)=\langle x^{g_1}, \ldots, x^{g_t} \rangle$
has the form
\begin{equation}
E^k(A)=\bigcup_{i=1}^t (g_i+\Z^n_{\ge 0})\,.
\label{unionofexponents}
\end{equation}
By (\ref{unionofexponents}), any $g\in E^k(A)$ has $||g||_{\infty}\ge m(A)$. Therefore the point $(m(A)-1)\1\notin E^k(A)$ and,  by Theorem \ref{main2} (i), we obtain $A((m(A)-1)\1)=(m(A)-1)v\notin \sg_{\ge k}(A)$. Therefore,
by the definition of $\dfrob_k(A)$,
\begin{equation*}
m(A)-1\le \dfrob_k(A)\le \frob_k(A)\,.
\end{equation*}
This proves the lower bound in (\ref{boundsforgk}).

To derive the upper bound, we will show first that $\dfrob_k(A)$ satisfies the inequality
\begin{equation}
\dfrob_k(A)\le \dfrob_1(A) + m(A)\,.
\label{boundsforgk_via_g1}
\end{equation}
Let us choose any $y\in \interior((\dfrob_1(A)+m(A))v+\cone(A))\cap\Z^d$. To prove (\ref{boundsforgk_via_g1}), it is enough to show that $y\in \sg_{\ge k}(A)$.
Consider the point $y'=y-m(A)v$. Since $y'\in  \interior(\dfrob_1(A)v+\cone(A))\cap\Z^d$, we have $y'\in \sg_{\ge 1}(A)$. Therefore, there exists $z\in \Z_{\ge 0}^n$ such that $Az=y'$.
Hence $A(m(A)\1+z)=m(A)v+Az=y$. Finally, observe that $m(A)\1\in E^k(A)$ by (\ref{unionofexponents}), and hence $m(A)\1+z\in m(A)\1+\Z_{\ge 0}^n\subset E^k(A)$. Consequently, $y=A(z+m(A)\1)\in \sg_{\ge k}(A)$ and the inequality (\ref{boundsforgk_via_g1})
is proved. The upper bound in (\ref{boundsforgk}) now follows from  (\ref{general_via_diagonal}), (\ref{boundsforgk_via_g1}) and (\ref{upper_bound_for_dFN}) with $k=1$.

\section{Proof of Theorem \ref{k-holes-hilb}: Computing $k$-holes via Hilbert bases}
\label{kHoles}

In this section we combine the results of Hemmecke et al. \cite{hemmecke} with our techniques
to compute the elements of $\sg_{<k}(A)$ proving Theorem \ref{k-holes-hilb}.
We present an
algorithm to compute an \emph{explicit} representation of $\sg_{< k}(A)$, even for an
infinite case, using semigroups. We remark that this explicit representation need not be of
polynomial size in the input size of $A$.

In view of the proof of Theorem \ref{main2} (ii), it is enough to
compute all fundamental $k$-holes and then for each fundamental $k$-hole $f$ with nonempty set $L^k_{A,f}$ compute the standard monomials of the ideal $I^k_{A,f}$.
If the set $L^k_{A,f}$ is empty, the set of all $k$-holes $H_{k}$  contains the translated semigroup $f+\sg(A)$.
By Lemma \ref{zonotope}, all fundamental $k$-holes are located in a zonotope $P=\left\{A\lambda: \lambda \in [0,1)^n\right\}$.
Thus, with a straightforward generalization of the approach proposed in Hemmecke et al. \cite{hemmecke}, the fundamental $k$-holes the can be computed by using a Hilbert basis of the cone $\cone(A)$.

%

Let $f$ be a fundamental $k$-hole. Recall that for a nonempty set $L^k_{A,f}$ the monomial ideal $I^k_{A,f}\subset {\mathbb Q}[x_1,\ldots, x_n]$ is defined as
\begin{equation*}
I^k_{A,f}=\langle x^\lambda: \lambda\in L^k_{A,f}\rangle\,
\end{equation*}
and $f+A\lambda$ is not a $k$-hole if and only if $x^\lambda \in I^k_{A,f}$.

Thus we need to compute the exponents of standard monomials for the ideal $I^k_{A,f}$. Any such exponent $\lambda\in \Z^n_{\ge 0}$ corresponds to the $k$-hole $f+A\lambda$.
The exponents of standard monomials can be computed explicitly from a set of generators of the ideal. Hence, it is enough to find the generators of $I^k_{A,f}$.
Let us fix an ordering $\prec$ in $\Z^n_{\ge 0}$.  The minimal generators for the ideal $I^k_{A,f}$ correspond to the $\prec$-minimal elements of the set
\begin{equation*}\begin{split}
L^k_{A,f}=\{\lambda\in \Z^n_{\ge 0}:  \exists\mbox{ distinct }\mu_1,\ldots,\mu_k\in  \Z^n_{\ge 0} \mbox{ such that }\\ f+A\lambda=A\mu_1=\cdots=A\mu_k\}\,.
\end{split}
\end{equation*}

For computational purposes it is enough to compute a set of vectors of $L^k_{A,f}$ that contains all the $\prec$-minimal elements. We will proceed as follows.
Let $K$ be a complete graph with the vertex set $V=\{1,2, \dots, k\}$.  By a weighted orientation $H$ of $K$ we will understand  a weighted directed graph  $H=(V,E)$
such that any two vertices of $H$ are connected by a directed edge $e\in E$ with a weight $w(e)\in\{1,\ldots, n\}$.

Let ${\mathcal S}$ be  set of all weighted orientations of $K$.
For each $H\in {\mathcal S}$ we construct the following two auxiliary sets:
the set
\begin{equation*}\begin{split}
L_{H}=\{\lambda\in \Z^n_{\ge 0}:  \exists\mu_1,\ldots,\mu_k\in  \Z^n_{\ge 0} \mbox{ such that }f+A\lambda=A\mu_1=\cdots=A\mu_k\\
\mbox{and } (\mu_{i})_{w(e)}\le (\mu_{j})_{w(e)}-1 \mbox{ for each } e=(i,j)\in E\,\}
\end{split}
\end{equation*}
and the set
\begin{equation*}\begin{split}
M_{H}=\{(\lambda, \mu_1, \ldots, \mu_k)\in \Z^{(k+1)n}_{\ge 0}:   f+A\lambda=A\mu_1=\cdots=A\mu_k\,\;\\ \mbox{ and } (\mu_{i})_{w(e)}\le (\mu_{j})_{w(e)}-1 \mbox{ for each } e=(i,j)\in E\}\,.
\end{split}\end{equation*}
Then, in particular, $L^k_{A,f}=\bigcup_{H\in {\mathcal S}} L_H$, where the union is taken over all orientations in $H\in {\mathcal S}$.

We will need the following result.

\begin{lemma}\label{proj}
Let $\lambda_0$ be a $\prec$-minimal element of $L_H$. Then there exists a $\prec$-minimal element of $M_{H}$ of the form $(\lambda_0, \hat \mu_1, \ldots, \hat \mu_k)$.
\end{lemma}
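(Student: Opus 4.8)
The plan is to exploit the fact that $L_H$ is precisely the image of $M_H$ under the projection $\pi\colon \Z^{(k+1)n}_{\ge 0}\to\Z^n_{\ge 0}$ onto the first block of $n$ coordinates, and that this projection is monotone for the coordinatewise partial order underlying the notion of $\prec$-minimality. Throughout, a $\prec$-minimal element of a set is one that is minimal for the componentwise order $\le$, consistently with the fact that the minimal generators of a monomial ideal are exactly its divisibility-minimal exponents. The whole argument rests on the single structural identity $L_H=\pi(M_H)$, which is immediate from the definitions of the two sets.

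First I would observe that the fiber $F=\{(\lambda_0,\mu_1,\ldots,\mu_k)\in M_H\}$ over $\lambda_0$ is nonempty: since $\lambda_0\in L_H=\pi(M_H)$, there is at least one choice of $\mu_1,\ldots,\mu_k$ completing $\lambda_0$ to an element of $M_H$. Because the coordinatewise order on $\Z^{kn}_{\ge 0}$ is well-founded (Dickson's lemma), the fiber $F$ contains a $\prec$-minimal element $(\lambda_0,\hat\mu_1,\ldots,\hat\mu_k)$.

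Next I would upgrade this to $\prec$-minimality in all of $M_H$, not merely within the fiber. Suppose, for contradiction, that some $(\lambda',\mu_1',\ldots,\mu_k')\in M_H$ satisfies $(\lambda',\mu_1',\ldots,\mu_k')\le(\lambda_0,\hat\mu_1,\ldots,\hat\mu_k)$ componentwise, with strict inequality in at least one coordinate. Projecting the first block gives $\lambda'\le\lambda_0$, and since $\lambda'=\pi(\lambda',\mu_1',\ldots,\mu_k')\in L_H$ while $\lambda_0$ is $\prec$-minimal in $L_H$, this forces $\lambda'=\lambda_0$. Hence both tuples share the same first block, so the strict descent occurs entirely among the $\mu$-coordinates; that is, $(\lambda_0,\mu_1',\ldots,\mu_k')\in F$ is componentwise dominated by and distinct from $(\lambda_0,\hat\mu_1,\ldots,\hat\mu_k)$. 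This contradicts the $\prec$-minimality of $(\lambda_0,\hat\mu_1,\ldots,\hat\mu_k)$ inside $F$, completing the argument.

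The main (and essentially only) subtle point is the passage from fiberwise minimality to global minimality, which hinges on combining monotonicity of $\pi$ with the minimality of $\lambda_0$ in the image $L_H$: any element of $M_H$ dominated by $(\lambda_0,\hat\mu_1,\ldots,\hat\mu_k)$ must project to something dominated by $\lambda_0$, hence to $\lambda_0$ itself, trapping the comparison inside a single fiber. No properties of the matrix $A$ or of the weighted orientation $H$ beyond $L_H=\pi(M_H)$ are required, so the lemma is in fact a purely order-theoretic statement about projections of well-founded posets.
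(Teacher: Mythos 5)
Your proof is correct and follows essentially the same route as the paper's: both pick a $\prec$-minimal element of the fiber $M_H|_{\lambda=\lambda_0}$ (nonempty since $\lambda_0\in L_H$) and then rule out any strictly smaller element of $M_H$ by the same dichotomy, namely $\lambda'\neq\lambda_0$ contradicts minimality of $\lambda_0$ in $L_H$, while $\lambda'=\lambda_0$ contradicts minimality within the fiber. The only cosmetic differences are that you argue directly rather than starting from a global contradiction, and you explicitly invoke well-foundedness (Dickson's lemma) for the existence of the fiber-minimal element, which the paper leaves implicit.
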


\begin{proof} Let $\lambda_0$ be a $\prec$-minimal element of $L_H$. Suppose on contrary, for every $(\mu_1, \ldots, \mu_k)\in \Z^{kn}_{\ge 0}$ the vector
$(\lambda_0,  \mu_1, \ldots,  \mu_k)$ is not a $\prec$-minimal element of $M_{H}$. Let $(\hat \mu_1, \ldots, \hat \mu_k)$ be a $\prec$-minimal element of
the set
\begin{equation*}\begin{split}
M_{H}|_{\lambda=\lambda_0}=\{(\mu_1, \ldots, \mu_k)\in \Z^{kn}_{\ge 0}:   f+A\lambda_0=A\mu_1=\cdots=A\mu_k\,\;\\ \mbox{ and } (\mu_{i})_{w(e)}\le (\mu_{j})_{w(e)}-1 \mbox{ for each } e=(i,j)\in E\}\,.
\end{split}\end{equation*}
By the assumption, there exists a vector $(\lambda',  \mu_1', \ldots,  \mu_k')\in  M_H$ such that $(\lambda',  \mu_1', \ldots,  \mu_k')\prec (\lambda_0,\hat \mu_1, \ldots, \hat \mu_k)$ and $(\lambda',  \mu_1', \ldots,  \mu_k')\neq (\lambda_0,\hat \mu_1, \ldots, \hat \mu_k)$.
If $\lambda'\neq\lambda_0$ we get a contradiction to the $\prec$-minimality of $\lambda_0$ in $L_H$. On the other hand, if $\lambda'=\lambda_0$ we get a contradiction to the $\prec$-minimality of $(\hat \mu_1, \ldots, \hat \mu_k)$ in $M_{H}|_{\lambda=\lambda_0}$.
\end{proof}

In view of Lemma \ref{proj}, to compute a generating set for $L^k_{A,f}$ (or to determine that $L^k_{A,f}$ is empty) it is now enough to compute the set of all minimal elements for $M_H, H\in {\mathcal S}$ and remove the last $kn$ components from each of them.


%
%
%
%
%
%
%
%

\section{Proof of Theorem \ref{main1}: Generating functions and $k$-feasibility}
\label{sectionBarvinokApplications}

We wish to prove a representation theorem of a set of lattice points as a sum $\sum_{ b \ \ge k \text{-feasible} \ b \in [-M,M]^d} t^b$. First we need a lemma about how to find an objective function that orders all points in a box.

\begin{lemma} Let $n$ be a constant denoting the number of variables. Given a positive integer $R$ and the associated $n$-dimensional box $B_R=[0,R]^n$, there exists an integer linear objective function $\bar{c}_R^\intercal x$ such that $\bar{c}_R^\intercal (y_i- y_j) \not= 0$  for all pairs of non-zero lattice points $y_i,y_j$ inside the box $B_R$. One can find one such vector in polynomial time.
\end{lemma}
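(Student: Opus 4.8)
The plan is to exhibit an explicit integer vector $\bar c_R$ of the form $\bar c_R = (1, N, N^2, \ldots, N^{n-1})^\intercal$ for a suitable large integer $N$, and show that for this choice the linear functional $\bar c_R^\intercal x$ separates all lattice points of the box $B_R=[0,R]^n$. The key observation is that a vector of this ``radix'' shape evaluates a lattice point $x=(x_1,\ldots,x_n)$ to the integer $\sum_{i=1}^n x_i N^{i-1}$, which is precisely the base-$N$ representation of a number whose ``digits'' are the coordinates $x_i$. First I would note that every coordinate $x_i$ of a point in $B_R$ lies in $\{0,1,\ldots,R\}$, so if we pick $N = R+1$, then two distinct lattice points $y_i,y_j\in B_R\cap\Z^n$ cannot have the same value under $\bar c_R^\intercal$: a collision would force two distinct base-$N$ digit strings (each digit in $\{0,\ldots,N-1\}$) to represent the same integer, contradicting uniqueness of base-$N$ representation.

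More carefully, I would prove the contrapositive in the form stated, namely $\bar c_R^\intercal(y_i-y_j)\neq 0$ whenever $y_i\neq y_j$. Write $z=y_i-y_j$, a nonzero integer vector with each coordinate satisfying $|z_\ell|\le R < N$. Let $\ell$ be the largest index with $z_\ell\neq 0$. Then $|\bar c_R^\intercal z| = \bigl|\sum_{i=1}^{\ell} z_i N^{i-1}\bigr| \ge |z_\ell| N^{\ell-1} - \sum_{i=1}^{\ell-1}|z_i| N^{i-1} \ge N^{\ell-1} - R\sum_{i=0}^{\ell-2} N^{i} = N^{\ell-1} - R\frac{N^{\ell-1}-1}{N-1} = N^{\ell-1} - (N^{\ell-1}-1) = 1 > 0$, using $R=N-1$ and the geometric-series identity. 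Hence $\bar c_R^\intercal z\neq 0$, which is exactly the required separation. (The phrase ``non-zero lattice points'' in the statement is harmless here, since the argument covers all distinct pairs; the origin is simply one admissible point among them.)

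For the polynomial-time claim, I would observe that the vector $\bar c_R=(1,R+1,(R+1)^2,\ldots,(R+1)^{n-1})^\intercal$ is written down by a closed formula once $R$ is given. Since $n$ is a fixed constant, the vector has a constant number of entries, and each entry $(R+1)^{i-1}$ has bit-size $O(i\log R)=O(n\log R)$, which is polynomial in the input size (the input encodes $R$ in $\log R$ bits). Thus the whole vector is computed and stored in time polynomial in $\log R$, establishing the complexity assertion.

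The main obstacle is really only bookkeeping rather than conceptual: one must verify that the chosen modulus $N$ strictly dominates the coordinate range so that the radix argument is valid, and one must handle the telescoping geometric sum cleanly to land on the sharp bound $|\bar c_R^\intercal z|\ge 1$. A secondary subtlety is to confirm that $\bar c_R$ has polynomially bounded bit-size, which hinges on $n$ being a fixed constant (as the lemma explicitly assumes); without fixing $n$ the entries $N^{n-1}$ would have exponentially many bits. Since the lemma grants $n$ constant, this concern dissolves, and the proof reduces to the elementary base-$N$ uniqueness estimate sketched above.
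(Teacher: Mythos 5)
Your proof is correct and takes essentially the same approach as the paper: both choose $\bar c_R$ to be the vector of powers $(1, s, s^2, \ldots, s^{n-1})$ of a base just beyond $R$ (you take $s = R+1$; the paper takes $s = R+2$). The only difference lies in the justification — the paper treats $\bar c_R^\intercal (y_i - y_j)$ as a polynomial in $s$ and invokes the Cauchy bound on its real roots, whereas you verify the specific choice $s = R+1$ directly via the base-$(R+1)$ uniqueness (telescoping) estimate, which is equally valid, slightly more elementary, and gives a marginally smaller vector.
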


\noindent {\em Proof:} Let $s$ be a single auxiliary (real) variable and the associated vector ${\bf c}(s)=(1,s,s^2,s^3,\dots,s^{n-1})^T$. Now for each of the $L=R^n (R^n-1)/2$ pairs of non-zero lattice vectors $y_i-y_j$ we construct one univariate polynomial $f_{i,j}(s)={\bf c}(s)^T (y_i-y_j)$ (since $y_i,y_j$ are distinct the polynomial $f_{i,j}$ is not identically zero). These are polynomials of degree $n-1$ so they can only have at most $n-1$ real roots each. Note also that these are polynomials all of whose coefficients are integer numbers between $-R$ and $R$, that means, by the famous Cauchy bound on the absolute values of roots of univariate polynomials that any of the real roots of any $f_{i,j}(s)$ must be bounded in above by $1+R$. Thus taking, for example, the value $s_0=R+2$ gives $\bar{c}_R={\bf c}(s_0)$ as an integer vector that totally orders all lattice points in the box $B_R$. Note that the bit-size description of $\bar{c}_R$ is polynomial in the input namely $n$, and $\log(R)$ because the entries are the first $n-1$ powers of $R$. The lemma is proved.

Below we will use the algorithmic technique of rational generating functions developed by Barvinok and Woods in \cite{bar,newbar} (see also the book \cite{barvibook}).
%

%
%

A key subroutine introduced by Barvinok and Woods is the following \emph {Projection Theorem}. 

\begin{lemma}[Theorem 1.7 in \cite{newbar}] \label{project}
Assume the dimension $n$ is a fixed constant.
Consider  a rational polytope $P \subset \real^n$ and a
linear map $T: \Z^n \rightarrow \Z^d$ such that $T(\Z^n)\subset \Z^d$.
There is a polynomial time algorithm which
computes a short representation of the
generating function $\, f \bigl(T(P \cap \Z^n),x\bigr) $.
\end{lemma}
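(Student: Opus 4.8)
The final statement is the Barvinok--Woods Projection Theorem, so the plan is to assemble it from Barvinok's counting algorithm together with the Boolean manipulation rules on short rational functions mentioned above. First I would run Barvinok's algorithm on $P$ to obtain a short rational expression for $f(P \cap \Z^n, x)$. The genuine obstacle is that projection is \emph{not} a substitution: if I naively replace each $x_i$ by the monomial dictated by $T$, a point $b$ in the image $T(P\cap\Z^n)$ gets counted once for every lattice preimage in $P$, whereas the target generating function must record each $b$ exactly once. The whole proof is about removing this overcounting.

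The plan is to reduce to eliminating one coordinate at a time and to solve that single-variable problem by selecting a canonical preimage. After a preliminary reduction I may assume $T$ is the projection $\Z^m \to \Z^{m-1}$ forgetting the last coordinate; then for a finite set $S \subset \Z^m$ (finiteness coming from $P$ being a polytope) I want, for each fiber $\{a\}\times F_a$, to keep only the point with smallest last coordinate. Writing $e_m$ for the last unit vector, a point $s\in S$ is such a fiber-minimizer exactly when $s-e_m\notin S$, i.e. the set of minimizers is $M = S \setminus (S+e_m)$. Now $f(S+e_m,x)=x_m\,f(S,x)$ is immediate, the intersection $S\cap(S+e_m)$ is handled by the Hadamard-product (intersection) rule, and since $S\cap(S+e_m)\subseteq S$ the set difference gives $f(M,x)=f(S,x)-f\!\left(S\cap(S+e_m),x\right)$. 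By construction $T$ restricts to a bijection from $M$ onto the image, so $f(T(S),x_1,\dots,x_{m-1})$ is obtained from $f(M,x)$ by specializing $x_m=1$.

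For a general linear map $T$ I would first pass to the graph embedding $x\mapsto(x,Tx)$, which is injective on $\Z^n$ and carries $P\cap\Z^n$ to a lattice set in $\Z^{n+d}$ whose generating function is a monomial substitution of $f(P\cap\Z^n,x)$; projecting onto the last $d$ coordinates then recovers $T(P\cap\Z^n)$, and this projection decomposes into a bounded number of single-coordinate eliminations as above. Each elimination uses only a constant number of Boolean operations, and with $n$ fixed the total is polynomial. I expect two places to require care. The first is the specialization $x_m=1$: naively this can produce $0/0$ in the rational expressions, so one must invoke the monomial-substitution/specialization lemma of Barvinok--Woods (choose a generic direction and take a residue or limit) to carry it out in polynomial time. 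The second, and the real engine of the argument, is the polynomial-time intersection (Hadamard product) of short rational functions for fixed dimension; this is the deep Barvinok--Woods ingredient on which everything rests, and it is where essentially all the difficulty of the theorem is concentrated.
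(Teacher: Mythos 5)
First, a remark on the comparison itself: the paper never proves this lemma --- it is imported verbatim as Theorem 1.7 of Barvinok--Woods \cite{newbar} and used as a black box --- so your attempt has to be judged against the proof in that reference, not against anything in this paper. Within your sketch, the single-coordinate step is sound as far as it goes: when $S=Q\cap\Z^m$ for a \emph{polytope} $Q$, convexity forces every fiber to be a contiguous interval of integers, so the fiber-minimizers are exactly $M=S\setminus(S+e_m)$, and the Hadamard-product, set-difference, and specialization steps are legitimate Barvinok--Woods operations. The graph-embedding reduction is also standard.

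The genuine gap is the claim that the general projection ``decomposes into a bounded number of single-coordinate eliminations as above.'' It does not, and this is precisely where the depth of the theorem is hidden. The contiguity of fibers is a consequence of convexity, and it is destroyed after the first elimination: the intermediate set is then a \emph{projection} of the lattice points of a polytope, which in general is not the lattice-point set of any polytope and can have fibers with gaps, so $S\setminus(S+e)$ no longer meets each fiber exactly once. Concretely, run your own scheme on $P=\conv\{(0,0),(2,4)\}\subset\real^2$ and $T(x_1,x_2)=x_2-2x_1$, so that $P\cap\Z^2=\{(0,0),(1,2),(2,4)\}$ and $T$ sends all three points to $0$. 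The graph embedding gives $\{(0,0,0),(1,2,0),(2,4,0)\}$; eliminating $x_1$ is fine (singleton fibers) and yields $S_1=\{(0,0),(2,0),(4,0)\}$ in the coordinates $(x_2,y)$; but when you next eliminate $x_2$, the fiber over $y=0$ is $\{0,2,4\}$, which is not contiguous, so $S_1\setminus(S_1+e_{x_2})=S_1$ retains all three points and your final specialization returns $3$ instead of the correct value $1$. Thus ``$T$ restricts to a bijection from $M$ onto the image'' fails from the second elimination onward, and the overcounting you set out to remove reappears. This cannot be patched by more Boolean operations on translates; the actual Barvinok--Woods proof does not iterate eliminations but treats all forgotten coordinates at once, decomposing the image by flatness-theorem/Kannan-style parametric integer-programming arguments and only then using Hadamard products to correct overcounting among polynomially many pieces. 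Correspondingly, your closing assessment misplaces the difficulty: the Hadamard product is an ingredient, but the core of Theorem 1.7 is exactly the multi-coordinate fiber-selection problem that your induction elides.
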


Now all the set up of the proof of Theorem \ref{main1} is ready. 
Recall that $A$ is an integral $d \times n$ matrix and $n,k$ are constants. Let $M$ be a given positive integer as in the statement
of the theorem, and let $R$ be another integer positive number (to be set a bit later).
We can define the polyhedron (note $X_i$ denotes an $n$-dimensional vector so this polytope lives in $nk$-dimensional space):

\begin{equation*}
\begin{split}
Q(A,k,R,M)=\bigl\{ (X_1,X_2,\dots,X_k) \in {\real^{nk}} \ : \,\, A X_1= \cdots=A X_k, \, \bar{c}_R^\intercal X_i \geq \bar{c}_R^\intercal X_{i+1} +1, \\ \text{for} \ i=1,\dots, k-1 \,\,\hbox{and} \,  \,  R \geq X_1 \geq 0, \,   M \geq AX_1 \geq -M \bigr\}\,.
\end{split}
\end{equation*}

One can use Barvinok's algorithm to compute the generating function of the lattice points of $Q(A,k,R,M)$ \cite{barvibook}.
The polytope $Q(A,k,R,M)$ has the following two key properties: First, all its integer points represent \emph{distinct} $k$-tuples of 
integer points that are in some parametric polyhedron $P_A(b)=\{x : Ax=b, x\geq 0\}$. When we turn lattice points into monomials, $ z_1^{X_1} z_2^{X_2} \dots z_k^{X_k}$ 
has only those exponents where $X_i \not= X_j$. Namely, this is precisely the set of all  monomials coming from $k$-tuples of distinct vectors in $\Z^n_{\ge 0}$ 
that give the same value $A X_1=A X_2= \dots =A X_k$. Second, all vectors $X_1$ are in the box $B_R=[0,R]^n \subset \real^n$, and third, all vectors $AX_1$ are in the box $[-M,M]^d \subset \real^d$.
It is important to note if $R$ is too small the pre-images $X_1$, when projected by $A$, may not hit all points in $\sg_{\geq k} (A) \cap [-M,M]^d$ which is what we want. We will calculate a
sufficiently large $R$ below.  

%
%
%
%
%

We now apply a very simple linear map $T(X_1,X_2,\dots,X_k)=AX_1$, by multiplication with $A$.
This map yields of course for each $k$-tuple (which has $X_i \not= X_j$) the corresponding right-hand side vector $b=A X_1$
that has at least $k$-distinct solutions and $b \in [-M,M]^d$. From the generation function of the lattice points of $Q(A,k,R,M)$ we use Lemma \ref{project} to obtain a generating function expression 

$$f=\sum_{b \in  \text{projection of} \ Q(A,k,R,M): \, \text{with at least $k$-representations and} \ b \in [-M,M] ^d} t^b.$$

Now we must take care that this captures all such $b$ within the box $\{b :  -M\le b_i \leq  M\}$.  To achieve this we need to find a large enough value of $R$
that suffices to capture all representations of $b$'s within the box $B_R$ when projected.  Such $R$ can be calculated as the maximum value among $n$ linear programs, one
for each variable $x_i$, given by $ \max x_i  \ \text{subject to} \  -M \leq Ax \leq M, \ x_i \geq 0$.  With this choice of $R$, the generating function $f$ above gives all the desired values of $b$.
Which is the desired  short rational function which efficiently represents the sum $\sum_{\ge k \text{-feasible} \ b \in [-M,M]^d } t^b$. This proves the main
result in the body of the paper for $\ge k$-feasibility.  Now, because if one knows a description for  $\sg_{\ge k}(A)$  and $\sg_{\ge k+1}(A)$ 
one knows $\sg_{= k}(A)=\sg_{\ge k}(A) \backslash \sg_{\ge k+1}(A)$ and $\sg_{< k}(A)=\sg(A)\backslash \sg_{\ge k}(A)$.  The similar generating functions of the other
cases ($<k$, $=k$)  are just obtained as the difference of the generating functions.


Now that we have proved the main statement of Theorem \ref{main1}, we move to prove Parts (a) to (d) of the theorem.

\begin{enumerate}

\item[{\bf Part (a)}] If we have a generating function representation of
$$\sum_{b: \ b  \ \text{is} \ \ge k\text{-feasible}, \ b \in [-M,M]^d} t^b,$$
it has the form  $$f(t)=\sum_{i \in I} \alpha_i {t^{p_i} \over (1-t^{a_{i1}}) \cdots  (1-t^{a_{ik}})}. $$

Note that by specializing at $t=(1, \ldots, 1)$, we can count how many $b$'s are $\ge k$-feasible (again the set is finite because it fits inside a box). Remark  the substitution
is not immediate since $t=(1, \ldots, 1)$ is a pole of each fraction in the representation of $f$. This problem is solvable because it has been shown by Barvinok and Woods 
that this computation can be  handled  efficiently (see Theorem 2.6 in \cite{newbar} for details) and this proves Part (a).

\item[{\bf Part (b)}] This item is a direct corollary of the following extraction lemma.

\begin{lemma}[Lemma 8 in \cite{latte2} or Theorem 7.5.2 in \cite{AGTO}] \label{extractmonomial}
Assume the dimension $n$ is fixed.  Let $S \subset \Z^n_+$ be nonempty and finite set of lattice points.
Suppose the polynomial $\,f(S;z) = \sum_{\beta \in S} z^\beta
\, $ is represented as a short rational function and let $c$ be a cost
vector. We can extract the (unique) lexicographic largest leading
monomial from the set $\{x^{\alpha}: \alpha \cdot c = L, \, \alpha \in S\}$,
where $L := \max \{\alpha \cdot c: \alpha \in S\}$, in polynomial time.
\end{lemma}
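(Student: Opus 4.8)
The plan is to run the Barvinok--Woods calculus of short rational functions, reducing the extraction of the lexicographically largest $c$-maximizer to a bounded number of one- and two-variable degree computations; since $n$ is fixed, every substitution and coefficient operation of \cite{newbar} runs in polynomial time.

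First I would collapse the two-stage selection (first maximize $c\cdot\alpha$, then break ties lexicographically) into a single linear objective. As $S$ is finite and sits inside a box $[0,R]^n$ whose side $R$ has polynomial bit-size, I mimic the construction in the first lemma of this section: set $\bar c=\rho\, c+\ell$, where $\ell_i=(R+1)^{\,n-i}$ assigns base-$(R+1)$ place values (so that $\ell\cdot\alpha$ is strictly increasing in the lexicographic order on $[0,R]^n\cap\Z^n$), and $\rho$ is an integer of bit-size $O(n\log R)$ chosen large enough that the $c$-term dominates every possible variation of $\ell\cdot\alpha$ over $S$. Then $c\cdot\alpha>c\cdot\beta$ forces $\bar c\cdot\alpha>\bar c\cdot\beta$, while among the $c$-maximizers $\bar c$ reduces to the strict order induced by $\ell$; hence $\bar c$ attains its maximum over $S$ at the unique point $\alpha^{*}$ we are asked to return. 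This reduces the statement to the following: given $f(S;z)$ and a single integral objective $\bar c$ whose maximum over $S$ is attained uniquely at $\alpha^{*}$, compute $z^{\alpha^{*}}$.

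Next I would find the optimal value and then recover $\alpha^{*}$ coordinate by coordinate. Applying the monomial substitution $z\mapsto t^{\bar c}$ to $f(S;z)$ gives a univariate short rational function $g(t)=\sum_{\alpha\in S}t^{\bar c\cdot\alpha}$, whose degree is $L'=\max_{\alpha\in S}\bar c\cdot\alpha$ and whose leading coefficient equals $1$ by uniqueness. To read off the $i$-th coordinate of $\alpha^{*}$, use the two-variable substitution $z_i\mapsto s^{\bar c_i}u$ and $z_j\mapsto s^{\bar c_j}$ for $j\ne i$, producing
\[
G_i(s,u)=\sum_{\alpha\in S}s^{\bar c\cdot\alpha}\,u^{\alpha_i}.
\]
Because $\alpha^{*}$ is the unique maximizer, the coefficient of $s^{L'}$ in $G_i$ is the single monomial $u^{\alpha_i^{*}}$, so $\alpha_i^{*}$ is just the $u$-degree of this top slice. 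Carrying this out for $i=1,\dots,n$ (finitely many steps, as $n$ is fixed) recovers every coordinate of $\alpha^{*}$, and hence the monomial $z^{\alpha^{*}}$.

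The only genuinely delicate point --- and the step I expect to be the main obstacle --- is that these monomial substitutions generically land on the singularities of the representation: a denominator factor $1-z^{c_{j,i}}$ of (\ref{barvinokseries}) becomes $1-t^{\bar c\cdot c_{j,i}}$, which vanishes identically whenever $\bar c\cdot c_{j,i}=0$, so the substituted expression is a term-by-term $0/0$ and cannot be evaluated directly. This is exactly the specialization-at-a-pole problem already invoked in Part (a); it is resolved in polynomial time for fixed $n$ by the perturbation-and-limit machinery of Barvinok and Woods (Theorem 2.6 and the substitution lemmas of \cite{newbar}), which computes the genuine polynomials $g$ and $G_i$. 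Granting that subroutine, reading the degree of a univariate short rational function and the leading coefficient of its top power are routine, so the whole extraction runs in polynomial time, proving the lemma.
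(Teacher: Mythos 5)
Note first that the paper contains no proof of this lemma for me to compare you against: it is imported verbatim as a known result (Lemma 8 of \cite{latte2}, Theorem 7.5.2 of \cite{AGTO}), and the in-paper ``proof'' is the citation itself. So your attempt can only be judged on its own terms and against the cited sources, which work with the same Barvinok--Woods toolkit you invoke.

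On those terms your argument is essentially correct in architecture, and its key claims are true: the collapse of the two-stage selection into one objective $\bar c=\rho c+\ell$ is sound (with $\ell_i=(R+1)^{n-i}$ and, say, $\rho=(R+1)^n$, a unit improvement in the integral objective $c\cdot\alpha$ outweighs any swing of $\ell\cdot\alpha$ over the box); the coefficient of $s^{L'}$ in $G_i$ is indeed the single monomial $u^{\alpha^*_i}$ by uniqueness of the maximizer; and you correctly isolate the pole problem under monomial substitution and cite the right cure (Theorem 2.6 and the substitution machinery of \cite{newbar}). Two steps are weaker than you present them. First, the box bound $R$ of polynomial bit size is asserted, not justified; it does hold (write $f=P/Q$ over the common denominator $Q=\prod_{i,j}(1-z^{v_{ij}})$; since $f$ is a polynomial, $P=fQ$, so every coordinate degree of $f$ is bounded by that of $P$, whose bit size is polynomial), but you need it before $\ell$ and $\rho$ can even be written down. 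Second, and more substantively, ``reading the degree'' of a univariate short rational function, and a fortiori extracting the top slice in $s$ of the bivariate $G_i$, is \emph{not} a reading operation: the leading monomials of the individual fractions can cancel, so the degree is not the maximum visible exponent. These extractions are polynomial time, but only by a further round of the same machinery --- binary search on a threshold, intersecting via Hadamard product with the generating function of a half-line or interval inside the box, and counting by specialization at $1$ --- and this intersection-and-count work is essentially where the cited proofs spend their effort (they recover the optimum coordinate by coordinate in this way, rather than through your single combined objective). If you patch this, either by citing the degree computation as a known subroutine or by running the binary-search argument explicitly, your proof is complete; you could even drop the bivariate step entirely by observing that $\ell\cdot\alpha^*=L'-\rho L$ is the base-$(R+1)$ encoding of all coordinates of $\alpha^*$ at once, so two univariate degree computations already determine the answer.
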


\item[{\bf Part (c)}]  Barvinok and Woods developed a way to do monomial substitutions (not just $t_i=1$ as we used in Part (a)), 
where the variable  $t_i$ in the current series, is replaced by a new monomial $z_1^{a_1}z_2^{a_2}\cdots z_r^{a_r}$. Note that the
rational generating function $f = \sum_{b \in Q \cap\Z^d} b^{b}$ can give the evaluations of the $b$'s   for a given objective function
$c\in\Z^d$.  If we make the substitution $t_i=z^{c_i}$, the above equation yields a \emph{univariate} rational function in $z$:

\begin{equation}\label{Univariate Polynomial in z}
f(z)=\sum_{i\in I}{E_i\frac{z^{c\cdot u_i}}{\prod_{j=1}^d(1-z^{c\cdot v_{ij}})
}}.
\end{equation}

Moreover $f(z)=\sum_{b \in Q\cap\Z^d} z^{c\cdot b}$.  Thus we just need to find the (lexicographically) largest monomial in the sum in polynomial time. But this follows from Part (b).
\end{enumerate}

To conclude we see how to compute the $k$-Frobenius number efficiently when $n$ number of variables and $k$ are fixed.
\vskip 12pt

\noindent \emph{Proof of Corollary \ref{k-frobeniuscomp}}: We start by observing that there is an upper bound for the  $k$-Frobenius number. Indeed Theorem 1.1 in \cite{AFH} gives already an upper bound that is certainly smaller than $M= k(n-1)! a_1 a_2 \cdots a_n$. The $k$-Frobenius number must be smaller and thus we will use $M$ in the bounding box created in Theorem \ref{main1}.

Next we claim the same generating function descriptions obtained in Theorem \ref{main1} can be also obtained for the sets of those  $b$ which are $=k$-feasible, $\geq k$-feasible, or $<k$-feasible with the added condition that $b_i \leq M$. This is because
the generating functions of those sets of $b$'s  can be obtained from the set of $b$'s encoded by Theorem \ref{main1} through Boolean operations (intersection, unions, complements). Indeed we clearly have
$\sg_{\ge k+1}(A)\setminus \sg_{\ge k}(A)= \sg_{= k}(A)$ and  $\sg_{< k}(A)=\sg_{\ge k}(A)\setminus \sg_{= k}(A)$. The same identities hold under the intersection with the box
$[-M,M]^d$, thus the claim follows.

We may see now that Corollary \ref{k-frobeniuscomp} follows directly from what we achieved in Theorem \ref{main1} and the Boolean operation Lemma of Barvinok and Woods. Indeed, from Theorem \ref{main1} we have a rational function representation of the $k$-feasible $b$ for the knapsack problem (note that $R$ is calculated by LPs as in the proof of Theorem \ref{main1}).

$$f(t)=\sum_{i\in I}{E_i\frac{t^{c\cdot u_i}}{\prod_{j=1}^d(1-t^{c\cdot v_{ij}}) }}   =\sum_{b \in \text{projection of}  Q(A,k,R,M) \cap\Z^d, \, \geq k-\text{feasible} b \in [-M,M]^n} t^{c\cdot b}.$$

Clearly the $k$-Frobenius number is simply the largest (lexicographic) $b$, such that $t^b$ is {\bf not} in $f(t)$, it is in its complement. Note the choice of bound $M$ is such
that we indeed have the $k$-Frobenius number inside.  Then, for the complement $\overline{S}={\Bbb Z}_+ \setminus S$, we compute
the generating function $f(\overline{S}; x)=(1-t)^{-1}-f(t)$ and then we compute the largest such $t^b$ in the complement using Lemma \ref{extractmonomial}.

\section{Integers with exactly $k$ representations}
\label{exactly_k}

Let $a=(a_1,\ldots,a_n)\in \Z^n_{>0}$ with $\gcd(a_1, \ldots, a_n)=1$. Recall that $g_k(a)$ denotes the largest positive integer $b$ such that $a_1x_1+\cdots+a_nx_n=b$ has {\em exactly} $k$ integral nonnegative solutions
if such $b$ exists and zero otherwise.
In other words, a positive $g_k$ is the largest integer that has {\em exactly $k$ representations} by $a_1,\ldots,a_n$.
In the case $n=2$ Beck and Robins  \cite{BeckRobins:extension} obtained the formula
\begin{equation}\label{theoremn=3}
g_{k-1}(a_1,a_2)=ka_1a_2-a_1-a_2\,.
\end{equation}
Note that  $\frob_k (a_1, a_2)=g_{k-1}(a_1,a_2)$ and, in general, $\frob_k(a)=\max \{g_i(a): i \leq k-1 \}$.

Given a fixed vector $a$, the behavior of the sequence $\{g_i(a)\}_{i=0}^\infty$ is far from being simple for $n\ge 3$.
For instance, it has been observed in Brown et al.  \cite{brownetal}
that $g_i(a_1,a_2,a_3) $ is not necessarily increasing with $i$. For example,
$g_{14}(3,5,8)=52$ whereas $g_{15}(3,5,8)=51$ and therefore in that particular case  $g_{14}>g_{15}$.
Furthermore, Shallit and Stankewicz \cite{shallit2011} proved that for $i>0$
and $n=5$, the quantity $g_0-g_i$ is arbitrarily large and positive. They also give an
example of $g_0>g_1$ for $n=4$.

For $n=3$ Brown et al. \cite{brownetal} posed the question whether or not the inequality $g_0<g_1$ always holds.
In what follows we answer this question in affirmative.
\begin{theorem}\label{Browns_conj}
Given $a_1<a_2<a_3\in \mathbb Z_{>0}$,
we have
$$g_0(a_1,a_2,a_3) < g_1(a_1,a_2,a_3) . $$
\end{theorem}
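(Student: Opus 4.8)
The plan is to prove the equivalent statement that there exists an integer $N>g_0(a_1,a_2,a_3)$ with \emph{exactly one} representation; since $g_1$ is by definition the largest integer with exactly one representation, any such $N$ forces $g_1\ge N>g_0$. Recall that $g_0$ is just the classical Frobenius number $\frob_1(a)$, i.e. the largest integer not representable at all, so every integer $b>g_0$ is representable and the only issue is the \emph{number} of its representations.

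First I would introduce the three natural candidates $N_i=g_0+a_i$ for $i\in\{1,2,3\}$ and prove the key reduction lemma: every representation of $N_i$ uses the generator $a_i$ zero times, because a positive coefficient on $a_i$ would, after removing one copy of $a_i$, represent $g_0$, contradicting $g_0=\frob_1(a)$. Hence the representations of $N_i$ are exactly those of $N_i$ in the two-generated subsemigroup $\langle a_j,a_k\rangle$ (with $\{i,j,k\}=\{1,2,3\}$), and $N_i\in\langle a_j,a_k\rangle$. For two generators the count is transparent: with $d=\gcd(a_j,a_k)$, two representations of one value differ by the shift $(x_j,x_k)\mapsto(x_j-a_k/d,\;x_k+a_j/d)$, so $N_i$ has a \emph{unique} representation if and only if $N_i-\mathrm{lcm}(a_j,a_k)\notin\langle a_j,a_k\rangle$.

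The heart of the proof is the claim that at least one index $i$ meets this criterion. In the easy regime one has $g_0+a_i<\mathrm{lcm}(a_j,a_k)$ for some $i$: then $N_i-\mathrm{lcm}(a_j,a_k)<0$ is trivially outside $\langle a_j,a_k\rangle$ and $N_i$ is unique; this is guaranteed whenever the Frobenius number satisfies $g_0<\max_i\bigl(\mathrm{lcm}(a_j,a_k)-a_i\bigr)$. The difficult regime is when $g_0+a_i\ge\mathrm{lcm}(a_j,a_k)$ holds \emph{simultaneously} for all three $i$; there I must show that for at least one $i$ the residual number $N_i-\mathrm{lcm}(a_j,a_k)$, although in the correct class modulo $\gcd(a_j,a_k)$ (since $N_i$ is representable by the pair), is a genuine finite gap of $\langle a_j,a_k\rangle$, equivalently that $N_i/d-(a_j/d)(a_k/d)$ is not representable by the coprime pair $a_j/d,\,a_k/d$.

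This second regime is where I expect the real work, and a crude size bound on $g_0$ alone will not suffice: for $(a_1,a_2,a_3)=(6,10,15)$ one has $g_0=29$ with $\mathrm{lcm}(a_j,a_k)-a_i\le 24<29$ for every $i$, yet $N_1=35$ is still uniquely represented because $35-\mathrm{lcm}(10,15)=5$ is not a nonnegative combination of $10$ and $15$. To treat this regime uniformly I would invoke the explicit ``L-shaped'' description of the Ap\'ery set $\mathrm{Ap}(S,a_1)$ of a three-generated numerical semigroup (Rosales--R\"odseth): observe that $N_1=g_0+a_1$ is precisely the \emph{largest} element of $\mathrm{Ap}(S,a_1)$, hence a corner of the L-shape, and the geometry of that corner dictates exactly whether the admissible shift $(x_2,x_3)\mapsto(x_2-a_3/d_1,\;x_3+a_2/d_1)$ keeps both coordinates nonnegative; when it does not, $N_1$ is unique, and when it does one transfers the argument to $N_2$ or $N_3$ through a short case analysis on the pairwise gcds $\gcd(a_i,a_j)$. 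An acceptable alternative, avoiding the L-shape machinery, is a direct finite case analysis organized by the values of these three pairwise gcds, elementary but somewhat tedious.
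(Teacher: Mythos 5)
Your opening moves are sound, and they essentially reproduce the paper's key lemma for the pairwise coprime case: the observation that every representation of $N_i=g_0+a_i$ must avoid the generator $a_i$ (else removing one copy of $a_i$ would represent $g_0$), the resulting reduction to the two-generator semigroup $\langle a_j,a_k\rangle$, and the criterion that $N_i$ is uniquely representable if and only if $N_i-\mathrm{lcm}(a_j,a_k)\notin\langle a_j,a_k\rangle$ are all correct. Your ``easy regime'' covers in particular the pairwise coprime case, since there $g_0\le g_0(a_2,a_3)=a_2a_3-a_2-a_3$ and $a_1<a_2+a_3$ give $N_1<a_2a_3=\mathrm{lcm}(a_2,a_3)$; this is exactly the content of the paper's Lemma~\ref{relprime}.

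The genuine gap is the ``difficult regime.'' There you prove nothing: you name tools (the Rosales--R\"odseth L-shaped description of the Ap\'ery set, or ``a short case analysis on the pairwise gcds'') and assert that at least one of $N_1,N_2,N_3$ will pass the lcm test, but no argument is supplied, and it is not even established that this strengthened claim --- that \emph{some} $g_0+a_i$ always has a unique representation --- is true in general; your own example $(6,10,15)$ is verified ad hoc rather than by a uniform argument, so the case that motivates the whole difficulty is exactly the case left open. This is where the paper takes a different and much cheaper route: it quotes the reduction lemma of Brown et al.\ (Lemma~\ref{getridofgcd}), namely $g_j(a_1,a_2,a_3)=d\,g_j(a_1,a_2/d,a_3/d)+(d-1)a_1$ with $d=\gcd(a_2,a_3)$ (or both quantities vanish), which is increasing in $g_j$ and hence preserves the relative order of $g_0$ and $g_1$; applying it at most three times removes all pairwise common factors and lands in the coprime case, where the easy argument finishes. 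To close your gap you should either carry out the Ap\'ery-set analysis in full --- nontrivial, and possibly requiring you to first show that the candidate set $\{N_1,N_2,N_3\}$ even suffices --- or simply graft in this gcd-reduction, which makes your difficult regime disappear entirely.
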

To prove this theorem, we first consider the case in which all coefficients
are pairwise relatively prime.
\begin{lemma}
Let $a_1<a_2<a_3$ be the pairwise relatively prime positive integers.
 Then
$$g_0(a_1,a_2,a_3) < g_1(a_1,a_2,a_3). $$
\label{relprime}
\end{lemma}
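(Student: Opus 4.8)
The plan is to establish the strict inequality $g_0 < g_1$ for pairwise relatively prime $a_1 < a_2 < a_3$ by producing, for a suitable large integer, an explicit witness. Recall that $g_0 = \frob_1(a_1,a_2,a_3)$ is the ordinary Frobenius number (the largest integer with \emph{no} representation), while $g_1$ is the largest integer with \emph{exactly one} representation. Since $\frob_k(a)=\max\{g_i(a): i\le k-1\}$, we have $\frob_2 = \max\{g_0,g_1\}$, so the inequality $g_0 < g_1$ is equivalent to the statement that $g_1$ actually realizes $\frob_2$, i.e.\ that there is an integer strictly larger than $g_0$ which has exactly one representation. My strategy is therefore to exhibit such an integer explicitly.

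First I would set $N = g_0 + a_1 a_2 a_3$ (or some comparable controlled shift of the Frobenius number by a multiple of the product) and analyze its representations. The key structural fact I would exploit is that for three pairwise coprime coefficients, representations of a fixed $b$ differ by elements of the kernel lattice of the $1\times 3$ matrix $a^T$, which is generated by vectors supported on pairs of coordinates; the coprimality controls the minimal such moves, e.g.\ replacing $a_2 a_3$ copies worth along one axis. Concretely, I would argue that $g_0 + t$ for the right small $t$ has representations, and then count them: the aim is to pin down an integer just above $g_0$ whose representation count is forced to be exactly $1$ by a parity/divisibility or minimality argument coming from pairwise coprimality. An alternative and perhaps cleaner route is to take $b^\ast = g_0 + a_i$ for the coordinate $a_i$ that ``just barely'' fixes the non-representability of $g_0$: since $g_0$ has no representation but $g_0 + a_i$ does, one shows the new representation is unique because any second representation would, after subtracting $a_i$, yield a representation of $g_0$, a contradiction.

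The main obstacle will be controlling the \emph{exactness} of the count rather than mere existence: showing a candidate has \emph{at least} one representation is routine, but ruling out a \emph{second} distinct representation requires the pairwise coprimality hypothesis in an essential way, and I expect the argument to hinge on bounding how representations of a single right-hand side can differ. The natural tool is to consider two representations $(x_1,x_2,x_3)$ and $(y_1,y_2,y_3)$ of the same $b$, so that $a^T(x-y)=0$, and to use the fact that the integer kernel of $a^T$ is spanned by the three ``syzygy'' vectors of the form $\tfrac{a_j a_k}{\gcd}$-type differences; pairwise coprimality forces these syzygies to have large support entries, so two distinct representations of an integer only slightly above $g_0$ cannot both stay nonnegative. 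I would make this quantitative by comparing the candidate value against the largest right-hand side admitting two representations, which is itself a Frobenius-type quantity.

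Finally, I would arrange the witness so its value strictly exceeds $g_0$ by construction (for instance $g_0 + a_1 > g_0$ since $a_1 > 0$), giving $g_1 \ge b^\ast > g_0$ and hence the strict inequality. The reduction from the general Theorem~\ref{Browns_conj} to the pairwise-coprime Lemma~\ref{relprime} I would handle separately: when the $a_i$ share pairwise gcds, I expect to reduce to the coprime case by factoring out common divisors on suitable sublattices and tracking how $g_0$ and $g_1$ scale, using that a common factor $\gcd(a_i,a_j)=g$ dividing $b$ is necessary for representability along those coordinates, which both shifts and rescales the relevant Frobenius numbers in a controlled way.
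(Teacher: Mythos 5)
Your witness and overall plan coincide with the paper's own proof: take $b^{\ast}=g_0+a_i$, force $x_i=0$ in any representation by the subtraction argument, then rule out a second representation using pairwise coprimality. However, there is a concrete gap: you never commit to \emph{which} $a_i$ to add, and the choice is not innocuous. After forcing $x_i=0$, any two representations of $b^{\ast}$ are supported on the other two coordinates $\{j,k\}$ and differ by an integer multiple of $(a_k,-a_j)$ (this is where pairwise coprimality enters; note that restricting to two coordinates makes your appeal to generators of the full rank-two kernel lattice of $a^{T}$ unnecessary). Hence a second representation forces $b^{\ast}\ge a_j a_k$. Since $g_0(a_1,a_2,a_3)\le g_0(a_j,a_k)=a_ja_k-a_j-a_k$ by monotonicity and Sylvester's two-variable formula, one gets a contradiction precisely when $a_i<a_j+a_k$. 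This holds automatically for $i=1$ (the paper's choice, since $a_1<a_2<a_2+a_3$) and for $i=2$, but it can fail for $i=3$: for $(a_1,a_2,a_3)=(3,5,11)$ one has $g_0=7$, and $g_0+a_3=18=3\cdot 6=3+3\cdot 5$ has \emph{two} representations, so $g_0+a_3$ is not a valid witness. Any correct write-up must therefore fix $i=1$ (or at least impose $a_i<a_j+a_k$), which your "just barely fixes the non-representability" phrasing does not do.

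A second, related issue: your "cleaner route" — a second representation minus $a_i$ would yield a representation of $g_0$ — only excludes second representations with $x_i\ge 1$; it says nothing about two representations both having $x_i=0$, which is exactly the hard case (and the case realized in the example above). You do flag this obstacle and point to the kernel of $a^{T}$, but the quantitative comparison you propose is misidentified: there is no "largest right-hand side admitting two representations" (every integer beyond $\frob_2$ has at least two), and what is actually needed is the opposite kind of bound, namely that the \emph{smallest} element of $\langle a_2,a_3\rangle$ admitting two representations is $a_2a_3$, combined with $g_0+a_1\le a_2a_3-a_2-a_3+a_1<a_2a_3$. With $i=1$ fixed and this inequality made explicit, your outline becomes precisely the paper's proof of Lemma \ref{relprime}; your proposed reduction of the general Theorem \ref{Browns_conj} to the pairwise coprime case by factoring out gcds is also how the paper proceeds, via Lemma \ref{getridofgcd} quoted from Brown et al.
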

\begin{proof}
Let $f=g_0(a_1,a_2,a_3)$ denote the Frobenius number of the vector $(a_1,a_2,a_3)$.
Then
\begin{equation}
a_1x_1+a_2x_2+a_3x_3=f
\label{eq f}
\end{equation}
has no nonnegative integral solution.
On the other hand
\begin{equation}
a_1x_1+a_2x_2+a_3x_3=f+a_1
\label{eqfa1}
\end{equation}
has at least one nonnegative integral solution.
We will prove that this solution is unique. 
Let $\bar x$ be any nonnegative integral solution to \eqref{eqfa1}.
Observe that $\bar x_1=0$, otherwise we could trivially construct a nonnegative integral
solution for \eqref{eq f}. Therefore $\bar x=(0,\bar x_2,\bar x_3).$
Consider by contradiction that $\tilde x=(0,\tilde x_2,\tilde x_3)$ is another
nonnegative integral solution to \eqref{eqfa1}. Then $a_2(\bar x_2-\tilde x_2)+ a_3(\bar x_3-\tilde x_3)=0.$
In particular, since $\text{gcd}(a_2,a_3)=1$, this implies $|\bar x_2-\tilde x_2|= l {a_3}$
with $l$ a natural number,
which in turn implies that either $\bar x_2\geq a_3$ or $\tilde x_2\geq a_3$.
Assume w.l.o.g. that $\bar x_2\geq a_3$, and since $(0,\bar x_2,\bar x_3)$ is a nonnegative integral solution
to \eqref{eqfa1}, this implies that $f+a_1\geq a_2a_3$ and hence $f\geq a_2a_3-a_1$
which is in contradiction with the fact that $f=g_0(a_1,a_2,a_3)\leq g_0(a_2,a_3)
\leq  g_0(a_2,a_3)=a_2a_3-a_2-a_3.$\bigskip
\end{proof}
The following result was proved in \cite{brownetal}.
\begin{lemma}[Theorem 1 in Brown et al. \cite{brownetal} with $k=3$]
Let $d=\gcd(a_2,a_3)$ and $j\in \mathbb Z_{\ge 0}$ then either
$g_j(a_1,a_2,a_3)=dg_j(a_1,a_2/d,a_3/d)+(d-1)a_1$ or $g_j(a_1,a_2,a_3)=g_j(a_1,a_2/d,a_3/d)=0$, i.e.,
no right-hand-side achieves exactly $j$ integral solutions.
\label{getridofgcd}
\end{lemma}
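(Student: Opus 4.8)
The plan is to strip the common factor $d=\gcd(a_2,a_3)$ off the last two coefficients by decomposing the first coordinate modulo $d$. Write $a_2=d\,a_2'$ and $a_3=d\,a_3'$, so $\gcd(a_2',a_3')=1$; since $\gcd(a_1,a_2,a_3)=1$ and $d\mid a_2,a_3$, we necessarily have $\gcd(a_1,d)=1$, and this coprimality is what makes the reduction work. Let $r(b)$ count the solutions of $a_1x_1+a_2x_2+a_3x_3=b$ in $\Z_{\ge 0}^3$ and let $r'(b)$ count the solutions of $a_1y_1+a_2'y_2+a_3'y_3=b$, both declared to be $0$ for $b<0$. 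In this language $g_j(a_1,a_2,a_3)$ is the largest positive $b$ with $r(b)=j$ (and $0$ if none), and $g_j(a_1,a_2/d,a_3/d)=g_j(a_1,a_2',a_3')$ is the largest positive $b'$ with $r'(b')=j$.

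The key step is a bijection between the two solution sets. Reducing $a_1x_1+a_2x_2+a_3x_3=b$ modulo $d$ and using $d\mid a_2,a_3$ gives $a_1x_1\equiv b\pmod d$; because $\gcd(a_1,d)=1$ this forces $x_1$ into one residue class, $x_1\equiv s(b)\pmod d$, where $s(b)\in\{0,\dots,d-1\}$ is the unique solution of $a_1 s(b)\equiv b\pmod d$. Substituting $x_1=s(b)+d\,y_1$ with $y_1\ge 0$ and dividing by $d$ turns the equation into
\[
a_1 y_1 + a_2' x_2 + a_3' x_3 = \frac{b-a_1 s(b)}{d}.
\]
This is a bijection of solution sets, so $r(b)=r'(b')$ with $b'=(b-a_1 s(b))/d$. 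Conversely, for any $b'\ge 0$ and any residue $s\in\{0,\dots,d-1\}$ the integer $b=d\,b'+a_1 s$ satisfies $s(b)=s$ and hence $r(b)=r'(b')$; thus each level set $\{b:r(b)=j\}$ is the image of $\{b':r'(b')=j\}$ under the family of affine maps $b'\mapsto d\,b'+a_1 s$.

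With the bijection in hand the first formula is a maximization. I would write
\[
g_j(a_1,a_2,a_3)=\max\bigl\{\,d\,b' + a_1 s : b'\ge 0,\ r'(b')=j,\ s\in\{0,\dots,d-1\},\ d\,b'+a_1 s>0\,\bigr\},
\]
read as $0$ when the indicated set is empty. For each fixed admissible $b'$ the inner maximum over $s$ is attained at $s=d-1$, contributing the constant term $a_1(d-1)$; maximizing the remaining term $d\,b'$ then simply selects the largest admissible $b'$. If some \emph{positive} $b'$ has $r'(b')=j$, that largest value is exactly $g_j(a_1,a_2',a_3')$, and we obtain $g_j(a_1,a_2,a_3)=d\,g_j(a_1,a_2',a_3')+(d-1)a_1$, the first alternative of the statement.

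The remaining work, and the main obstacle, is the degenerate situation in which no positive $b'$ has exactly $j$ representations, so $g_j(a_1,a_2',a_3')=0$ under the stated convention. Here one cannot simply substitute into the formula, because the maximizing argument in the display now lives at or below the boundary $b'=0$: the contribution $b'=0$ (with $r'(0)=1$) can only play a role when $j=1$, and $b'<0$ (with $r'=0$) only when $j=0$. Determining which of these boundary configurations actually yields a genuine positive right-hand side for $(a_1,a_2,a_3)$, and reconciling the outcome with the convention that returns $0$ when no qualifying positive value exists, is precisely where the dichotomy in the statement originates and where all the delicacy lies; the residue bijection that drives the reduction is otherwise entirely routine, and I would expect the bulk of the rigorous write-up to be this boundary bookkeeping.
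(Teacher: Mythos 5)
The paper never proves this lemma: it is quoted verbatim from Brown et al.\ \cite{brownetal} (``The following result was proved in \cite{brownetal}''), so there is no internal argument to compare yours against. Your residue-class bijection --- writing $b=db'+a_1s$ with $0\le s\le d-1$, $a_1s\equiv b \pmod{d}$, so that $r(b)=r'(b')$ --- is the classical Johnson-type reduction that underlies the cited result, and your treatment of the main case (some positive $b'$ has exactly $j$ representations) is correct.

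The genuine gap is that you stop exactly where the content of the statement lies: the degenerate case, in which no positive $b'$ has exactly $j$ representations, is announced as ``boundary bookkeeping'' but never carried out, and the dichotomy in the lemma is precisely an assertion about that case, so the lemma is not proved. Worse, that bookkeeping is not routine, because pushing it through shows the statement is actually \emph{false as written} under this paper's convention ($g_j=0$ when no positive witness exists). Using your level-set identity, but without your silent restriction to $b'\ge 0$ in the displayed maximization: for $j\ge 2$ every $b'\le 0$ has $r'(b')\le 1<j$, so the original triple also has no positive witness and the second alternative holds; for $j=1$ the contribution $b'=0$, $s=d-1$ gives $g_1(a_1,a_2,a_3)=(d-1)a_1=d\cdot 0+(d-1)a_1$, so the first alternative holds by accident of the convention; but for $j=0$ the contributions with $b'<0$ break the dichotomy. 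Concretely, take $(a_1,a_2,a_3)=(2,3,9)$, so $d=3$ and the reduced triple is $(2,1,3)$: every positive integer is representable by $(2,1,3)$, hence $g_0(2,1,3)=0$, while $g_0(2,3,9)=1$ (only $b=1$ is non-representable), and neither $1=3\cdot 0+2\cdot 2=4$ nor $1=0$ holds. The formula is rescued if one instead defines $g_j$ as the largest \emph{integer} (possibly negative) with exactly $j$ representations --- then $g_0(2,1,3)=-1$ and $3\cdot(-1)+4=1$ is correct --- which is presumably the convention in force in \cite{brownetal}. So to complete your proof you must either adopt that convention, or, for $j=0$, add the hypothesis that the reduced triple has a positive non-representable integer (equivalently, that none of $a_1$, $a_2/d$, $a_3/d$ equals $1$); as it stands, the deferred case is both unfinished and the one place where the claim needs repair.
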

To prove Theorem \ref{Browns_conj} we will reduce the general case to the case of pairwise
relatively prime coefficients.
Consider a triple $(a_1,a_2,a_3)$. If the coefficients are pairwise relatively prime,
the result follows from Lemma \ref{relprime}.
Assume that there is some g.c.d. different from 1 for a pair of coefficients.
From Lemma \ref{getridofgcd}, we can get rid of the g.c.d. and that does not change the
relative order between $g_0$ and $g_1$. By applying at most three times
Lemma \ref{getridofgcd}, we come back to the case where all coefficients are pairwise
relatively prime and the result follows.
The theorem is proved.


\section{Computing $F_k(a)$ by dynamic programming and the behavior of $F_k(a_1,a_2,a_3)$} \label{kfrobcompute}

 The computational aspects of the classical Frobenius problem
were studied by many authors (see \cite{fasterfrob} and references therein).
In this section, we propose a practical dynamic programming approach that allows us to compute the number of integral solutions to the knapsack problems and the $k$-Frobenius numbers.

\subsection{A simple dynamic programming algorithm for $F_k(a)$}\label{sec: algorithm}

Given $a_1,\ldots, a_n\in \mathbb Z_{>0}$, we denote by
$T_i(b)$ the number of integral solutions  of the
knapsack problem  $\sum_{l=1}^n a_l x_l=b, x\in \Z^n_{\ge 0}$, satisfying $x_j=0$ for $j<i$ and $x_i\geq 1,$
i. e., $T_i(b)$ counts the number of integral solutions of the knapsack problem where the smallest
nonzero index is $i$.
The idea of the algorithm is to update an array $T_i(b)$ for increasing $b$ and for all $i.$
The following observation allows us to initialize the dynamic programming
approach.
\begin{lemma}
$T_i(a_i)=1$ and $T_i(b)=0$ for all $0\leq b\leq a_i-1.$
\label{initdp}
\end{lemma}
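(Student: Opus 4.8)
The plan is to argue directly from the defining constraints of $T_i(b)$, exploiting the positivity of all the $a_l$. Recall that every solution counted by $T_i(b)$ must satisfy $x_j=0$ for $j<i$ together with $x_i\ge 1$, so the equation $\sum_{l=1}^n a_l x_l=b$ collapses to $\sum_{l=i}^n a_l x_l=b$ with $x_i\ge 1$ and $x_l\ge 0$ for $l>i$. The whole argument rests on reading off a lower bound for $b$ from this reduced form.

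First I would establish the vanishing claim $T_i(b)=0$ for $0\le b\le a_i-1$. Since each $a_l$ is a positive integer and each $x_l$ is nonnegative, every summand $a_l x_l$ is nonnegative, and therefore $b=\sum_{l=i}^n a_l x_l\ge a_i x_i\ge a_i$, where the last inequality uses $x_i\ge 1$. Hence no admissible solution exists when $b<a_i$, giving $T_i(b)=0$ on the entire range $0\le b\le a_i-1$.

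Next I would show $T_i(a_i)=1$ by the same positivity bookkeeping. Setting $b=a_i$ turns the inequality chain above into $a_i=\sum_{l=i}^n a_l x_l\ge a_i x_i\ge a_i$, which forces both $a_i x_i=a_i$ and $\sum_{l>i} a_l x_l=0$. Because $a_i>0$, the first equality yields $x_i=1$; because each $a_l>0$, the second forces $x_l=0$ for all $l>i$. Thus the only admissible solution is $x_i=1$ with every other coordinate zero, so exactly one solution is counted and $T_i(a_i)=1$.

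There is no genuine obstacle here: the statement is purely the base case that initializes the dynamic program, and the reasoning reduces to the elementary observation that a sum of nonnegative terms carrying a forced positive leading term $a_i x_i$ is bounded below by $a_i$. The only point deserving care is to keep in mind that the phrase \emph{smallest nonzero index equals $i$} simultaneously kills every coordinate $x_j$ with $j<i$ and insists $x_i\ge 1$, which is precisely what produces the clean threshold $a_i$ below which nothing is representable.
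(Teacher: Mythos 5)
Your proof is correct: the paper states Lemma \ref{initdp} as an immediate observation with no written proof, and your positivity argument (every admissible solution forces $b=\sum_{l\ge i}a_l x_l\ge a_i x_i\ge a_i$, with equality at $b=a_i$ pinning down $x_i=1$ and $x_l=0$ for $l>i$) is exactly the intended elementary justification.
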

The following lemma explains how to update the function $T$.
\begin{lemma}
Given $b\neq a_i$,
\begin{align}
T_i(b) = \sum_{j=i}^n T_j(b-a_i).
\label{dp relation}
\end{align}
\label{maindp}
\end{lemma}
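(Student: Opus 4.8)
The plan is to prove the recurrence by exhibiting an explicit bijection between the two families of knapsack solutions counted on each side. First I would recall that $T_i(b)$ counts the nonnegative integral solutions $x$ of $\sum_{l=1}^n a_l x_l = b$ whose smallest nonzero coordinate is exactly $i$, that is, $x_j = 0$ for $j < i$ and $x_i \ge 1$. The natural move is to peel off one copy of the generator $a_i$: given such an $x$, I would set $y_i = x_i - 1$ and $y_l = x_l$ for $l \neq i$. Since $x_i \ge 1$ we have $y \in \Z^n_{\ge 0}$, and clearly $\sum_{l=1}^n a_l y_l = b - a_i$ with $y_l = 0$ for $l < i$.

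Next I would identify what the right-hand side counts. For each $j$ with $i \le j \le n$, the term $T_j(b-a_i)$ enumerates the nonnegative solutions of $\sum_l a_l y_l = b - a_i$ whose smallest nonzero index is exactly $j$; summing over $j \in \{i,\ldots,n\}$ therefore counts precisely the \emph{nonzero} solutions $y$ supported on the coordinates $\{i,\ldots,n\}$ (that is, $y_l = 0$ for $l < i$ and $y \neq 0$). The map $x \mapsto y$ described above lands in exactly this set, and its inverse $y \mapsto (y_1,\ldots,y_{i-1}, y_i+1, y_{i+1},\ldots,y_n)$ returns a solution of the target $b$ with smallest nonzero index $i$. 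Checking that these two maps are mutually inverse is routine, so the two sets are in bijection and the equality of their cardinalities follows.

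The one delicate point---and the place where the hypothesis $b \neq a_i$ is used---is the treatment of the zero vector. The sum $\sum_{j=i}^n T_j(b-a_i)$ counts only nonzero solutions, since the empty support has no smallest nonzero index and is excluded from every $T_j$. The all-zero vector is itself a solution of $\sum_l a_l y_l = b - a_i$ exactly when $b - a_i = 0$, i.e.\ when $b = a_i$; ruling this case out guarantees that every $y$ supported on $\{i,\ldots,n\}$ solving the shifted target is automatically nonzero, so that the bijection identifies $T_i(b)$ with the full right-hand side. I expect this bookkeeping around the excluded zero solution to be the only subtlety: when $b = a_i$ one has $T_i(a_i)=1$ while the sum collapses to $\sum_j T_j(0) = 0$, which is exactly the discrepancy handled separately by Lemma \ref{initdp}; when $b < a_i$ both sides vanish; and when $b > a_i$ the shift-by-$a_i$ correspondence applies verbatim.
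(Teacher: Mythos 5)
Your proof is correct and takes essentially the same approach as the paper: the paper also uses the shift correspondence, adding $e_i$ to a solution of the target $b-a_i$ and subtracting $e_i$ from a solution of the target $b$, phrased as two inequalities rather than a single bijection. Your explicit bookkeeping around the zero vector---identifying that the hypothesis $b \neq a_i$ is needed precisely so that the peeled solution cannot vanish---makes explicit a point the paper's proof leaves implicit.
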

\begin{proof}
We first prove that \eqref{dp relation} holds with $\geq$ instead of $=$.
Indeed consider  any nonnegative integral solution $\bar x$ to $\sum_{l=1}^n a_l x_l = b-a_i$
with $\bar x_j=0$ for all $1\leq j\leq i-1$, it can be transformed
into a nonnegative integral solution for $\sum_{l=1}^n a_l x_l = b$ by considering $\bar x+e_i$.
Obviously the first $(i-1)$ components are still zero and the $i^{th}$ component
is positive.

We now prove that \eqref{dp relation} holds with $\leq$.
Consider a solution $\hat x\in \mathbb Z_{\ge 0}^n$ with $\hat x_j=0 $ for all $1\leq j\leq i-1$
and $\hat x_i\geq 1$ to $\sum_{l=1}^n a_l x_l = b$. By subtracting $1$ from the $i$th component, we obtain
a nonnegative integer solution to $\sum_{l=1}^n a_l x_l = b-a_i$ where the first $i-1$ components are zero.
\end{proof}

Using Lemma \ref{initdp} and Lemma \ref{maindp}, we can fill in an array $T_i(b)$
starting from $b=0$
for increasing values of $b$.
Obtaining the array $T$ allows us to count the number of different solutions.
\begin{lemma}
The number of nonnegative integral solutions to $\sum_{l=1}^n a_l x_l =b $ is equal to
$\sum_{i=1}^n T_i(b).$
\end{lemma}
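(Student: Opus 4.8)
The plan is to exhibit a bijection between the set of nonnegative integral solutions to $\sum_{l=1}^n a_l x_l = b$ and the disjoint union over $i$ of the solution sets counted by $T_i(b)$. The key observation is that every nonnegative integral solution $x$ of the knapsack equation, other than the trivial solution when $b=0$, has a well-defined \emph{smallest nonzero index}. First I would dispose of the $b=0$ case separately (or note that it is covered by the convention in Lemma \ref{initdp}, where each $T_i(0)=0$ for all $i$ since $a_i\ge 1$, while the empty solution $x=0$ is the unique solution to $\sum a_l x_l = 0$; one must be slightly careful here). For $b\ge 1$, any solution has $x\neq 0$, so there is a unique index $i$ with $x_1=\cdots=x_{i-1}=0$ and $x_i\ge 1$.

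The main step is then a straightforward counting argument. For each fixed $i\in\{1,\dots,n\}$, the solutions with smallest nonzero index exactly $i$ are, by definition, precisely the solutions counted by $T_i(b)$: these are the $x\in\Z^n_{\ge 0}$ with $x_j=0$ for $j<i$ and $x_i\ge 1$. Since each solution has exactly one smallest nonzero index, these sets partition the full solution set as $i$ ranges over $\{1,\dots,n\}$. Summing the cardinalities gives
\begin{equation*}
\#\{x\in\Z^n_{\ge 0} : \textstyle\sum_{l=1}^n a_l x_l = b\} = \sum_{i=1}^n T_i(b)\,,
\end{equation*}
which is exactly the claimed identity.

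I expect no serious obstacle in this lemma; it is essentially a bookkeeping statement, and the substantive recursive content has already been extracted in Lemma \ref{maindp}. The only point requiring care is the boundary handling of $b=0$ (and possibly small $b$), where one should confirm that the partition by ``smallest nonzero index'' interacts correctly with the initialization in Lemma \ref{initdp}; once that edge case is checked, the disjoint-union decomposition finishes the argument immediately.
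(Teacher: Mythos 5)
Your proof is correct and takes essentially the same route as the paper: the paper's one-line argument is precisely your partition of the solution set by the smallest index with a nonzero coordinate, which by definition is what each $T_i(b)$ counts. Your extra attention to $b=0$ (where the all-zero solution has no smallest nonzero index, so the identity as literally stated would fail) is a minor refinement the paper silently skips, and it is harmless since the algorithm only evaluates $b \ge a_1+1$.
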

\begin{proof}
This follows from the fact that any integral solution to $\sum_{l=1}^n a_l x_l =b$
is counted exactly in one set corresponding to the smallest index for which $x_i$ is nonzero.
\end{proof}

 To determine the $k$-Frobenius number, we need to know
the largest $b$ that is $<k$-feasible. It is therefore important to determine a stopping
criterion for the dynamic programming algorithm. A first obvious criterion is to use
the upper bound for $b$ given in \cite{AFH}. It turns out that this upper bound is very often
too large compared to the actual $k$-Frobenius number and leads to longer
computation times. The following lemma
allows us to interrupt the computation with the guarantee for a $k$-Frobenius number earlier.
\begin{lemma}
Assume that $a_1<a_2<\cdots <a_n$ and that
$\sum_{i=1}^n a_ix_i=b$  has at least $k$ integral solutions for all $\bar b\leq b\leq \bar b+a_1$.
Then $\sum_{i=1}^n a_i x_i = b$ has at least $k$ integral solutions for all $b\geq \bar b$.
\label{stopcri}
\end{lemma}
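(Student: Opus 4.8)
The plan is to prove Lemma \ref{stopcri} by induction on $b$, using the dynamic-programming recurrence \eqref{dp relation} as the engine. The hypothesis gives us a block of $a_1+1$ consecutive right-hand sides, namely $\bar b, \bar b+1, \ldots, \bar b+a_1$, each of which is $\ge k$-feasible; the goal is to propagate $\ge k$-feasibility to \emph{every} $b \ge \bar b$. The natural strategy is a shifting argument: from any solution of $\sum_i a_i x_i = b'$ we can produce a solution of $\sum_i a_i x_i = b'+a_1$ simply by adding $1$ to the first coordinate $x_1$ (i.e. replacing $\bar x$ by $\bar x + e_1$). This map is injective, so the number of solutions is nondecreasing along arithmetic progressions of common difference $a_1$. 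Hence if $b$ is $\ge k$-feasible, so is $b+a_1$.

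First I would make that monotonicity precise. For every right-hand side $c$, the translation $\bar x \mapsto \bar x + e_1$ embeds the solution set of $\sum_i a_i x_i = c$ injectively into the solution set of $\sum_i a_i x_i = c + a_1$; therefore the count of nonnegative integral solutions at $c+a_1$ is at least the count at $c$. Consequently $\ge k$-feasibility of $c$ implies $\ge k$-feasibility of $c + a_1$. This is the single structural fact that does all the work, and it is essentially the $\ge$-direction already recorded in the proof of Lemma \ref{maindp}.

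Next I would run the induction. Every integer $b \ge \bar b$ can be written uniquely as $b = b_0 + m a_1$ for some $m \ge 0$ and some $b_0$ with $\bar b \le b_0 \le \bar b + a_1 - 1$ (the residue of $b-\bar b$ modulo $a_1$ selects $b_0$). By the hypothesis of the lemma, each such base value $b_0$ lies in the block $[\bar b, \bar b + a_1]$ and is therefore $\ge k$-feasible. Applying the monotonicity step $m$ times — adding $a_1$ repeatedly — shows that $b = b_0 + m a_1$ is $\ge k$-feasible as well. Since every $b \ge \bar b$ arises this way, the claim follows.

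I do not expect any genuine obstacle here; the lemma is elementary once the shift map is identified. The only point demanding a little care is the bookkeeping that guarantees the base case always lands inside the given block: one needs the full window of $a_1+1$ consecutive feasible values (equivalently $a_1$ consecutive values plus the endpoint) so that every residue class modulo $a_1$ has a representative in $[\bar b, \bar b + a_1]$ from which to start the shifting. With the window stated in the hypothesis this is automatic, so the argument is clean and the proof short.
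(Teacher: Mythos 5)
Your proof is correct and follows essentially the same route as the paper: the paper's one-line argument invokes the recurrence $T_1(b)=\sum_{i=1}^n T_i(b-a_1)$ together with the bound ``number of solutions at $b$ is at least $T_1(b)$,'' which is exactly your shift map $\bar x \mapsto \bar x + e_1$ packaged in the dynamic-programming notation of Lemma \ref{maindp}. You merely make explicit the residue-class bookkeeping that the paper leaves implicit, which is a fine (and arguably clearer) way to write the same proof.
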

\begin{proof}
This follows from the fact that $T_1(b) = \sum_{i=1}^n T_i(b-a_1)$ and that the number of integral solutions
of $\sum_{i=1}^n a_ix_i=b$ is at least $T_1(b)$.
\end{proof}

We now have all the necessary ingredients to describe a dynamic programming-based algorithm.

\begin{algorithm}
\caption{DP algorithm for the $k$-Frobenius number}
\begin{algorithmic}
\REQUIRE $a_1<a_2\cdots < a_n$, $k\geq 1$
\STATE $T_i(a_i)=1$ for all $i=1,\ldots, n$
\STATE $T_i(b)=0$ for $i=1,\ldots, n$ and $b<a_i$
\STATE  $b:=a_1+1$
\WHILE{$\exists \bar b\in [b-a_1,b-1]$ with $\sum_{i=1}^n T_i(\bar b) < k$}
\FOR{$i:=1 $ to $n$}
\STATE $T_i(b):= \sum_{j=i}^n T_j(b-a_i)$
\ENDFOR
\STATE  $b:=b+1$
\ENDWHILE
\STATE Return the largest $b$ that has less than $k$ solutions
\end{algorithmic}\label{algodp}
\end{algorithm}

We have done a number of computational experiments in order to evaluate the empirical complexity of Algorithm \ref{algodp}.
Our implementation of the algorithm was in standard C code with CPU Intel Core i7-975, 4 cores, 8 threads, CPU clock 3.33 GHz.
In the experiments we checked the average time needed to compute the $k$-Frobenius number for some common values
of $n$ and $a_1$, as well as the dependency on $k$. The algorithm demonstrated surprisingly good performance.
For instance, to check the dependency on $k$, we fixed the dimension to $n=10$ and
the range of values for $a_i$ to $[2^{14},2^{16}]$. For each value of $k$, we run 20 experiments.
The summary of the results is reported in Table \ref{kexperiment}.
\begin{table}
\begin{center}
\begin{tabular}{|l|ccccccccc|}
\hline
\multicolumn{10}{|c|}{Dimension $n=10$}\\
\hline
$k$ & 1 &  4 & 16 & 64 & 256 & 1024 & $2^{12}$ & $2^{14}$ & $2^{16}$ \\
\hline
Average time (s) & 0 &  0 & 0 & 0 & 0 & 0 & 1 & 1 & 1 \\
\hline
\end{tabular}
\end{center}
\caption{Dependency on $k$}
\label{kexperiment}
\end{table}
We observe that the average time grows very slowly with $k$.
 We utilize this property of Algorithm \ref{algodp} in the next section.

\subsection{Experiments on the $k$-Frobenius number $\frob_k(a_1,a_2, a_3)$}
\label{Frobenius_comp}

In this section we report the results of several experiments concerning the average behavior of the $k$-Frobenius number $\frob_k(a)$ and
make a conjecture on the basis of these results.
The experiments were performed using the dynamic programming algorithm introduced in Section \ref{sec: algorithm}.

Let $G_T=\{a=(a_1, a_2, a_3)\in \Z^3_{>0}: \gcd(a_1, a_2, a_3)=1\,, ||a||_\infty\le T\}$ and let $A_k(T)$ denote the average value of the (normalized) $k$-Frobenius number over $G_T$, that is
\begin{equation}
A_k(T)=\frac{1}{|G_T|}\sum_{(a_1, a_2, a_3)\in G_T}\frac{F_k(a_1, a_2, a_3)}{\sqrt{a_1 a_2 a_3}}\,.
\end{equation}
It follows from the results of Ustinov \cite{Ustinov_sbornik} that
\begin{equation}
A_1(T)\rightarrow \frac{8}{\pi}\;\;\; \mbox{as}\;\; T\rightarrow \infty\,.
\label{Ustinov_formula}
\end{equation}
%
%
For the case $k>1$ the upper bound in Theorem 1.1 of \cite{AFH}, together with (\ref{Ustinov_formula}) implies that for any $\epsilon>0$ for large enough $T$
\begin{equation*}
A_k(T) \le \frac{8}{\pi} + \sqrt{2(k-1)}+\epsilon \,.
\end{equation*}
On the other hand, from the lower bound  in Theorem 1.1 of \cite{AFH} and the proof of Proposition 1 in \cite{AHH} we conclude that for large enough $T$
\begin{equation}
A_k(T) >  \sqrt{2k}-\epsilon \,.
\label{lower_asympt}
\end{equation}

In our experiments we empirically obtained the values for $A_k(T_0)$ for fixed $T_0$ and various values of $k$. For instance, for $T_0=3000$ we have drawn 2000 triples of coefficients in the range $[2,T_0].$
For each triple, we have computed the $k$-Frobenius number for $k=10000,30000,60000,100000,150000.$
In Figure \ref{figevol}, we represent, with a logarithmic scale, the product $a_1a_2a_3$
on the $x$-axis and the $k$-Frobenius on the $y$-axis for the respective values of $k$.
\begin{figure}
\begin{center}
\includegraphics[width=.5\linewidth]{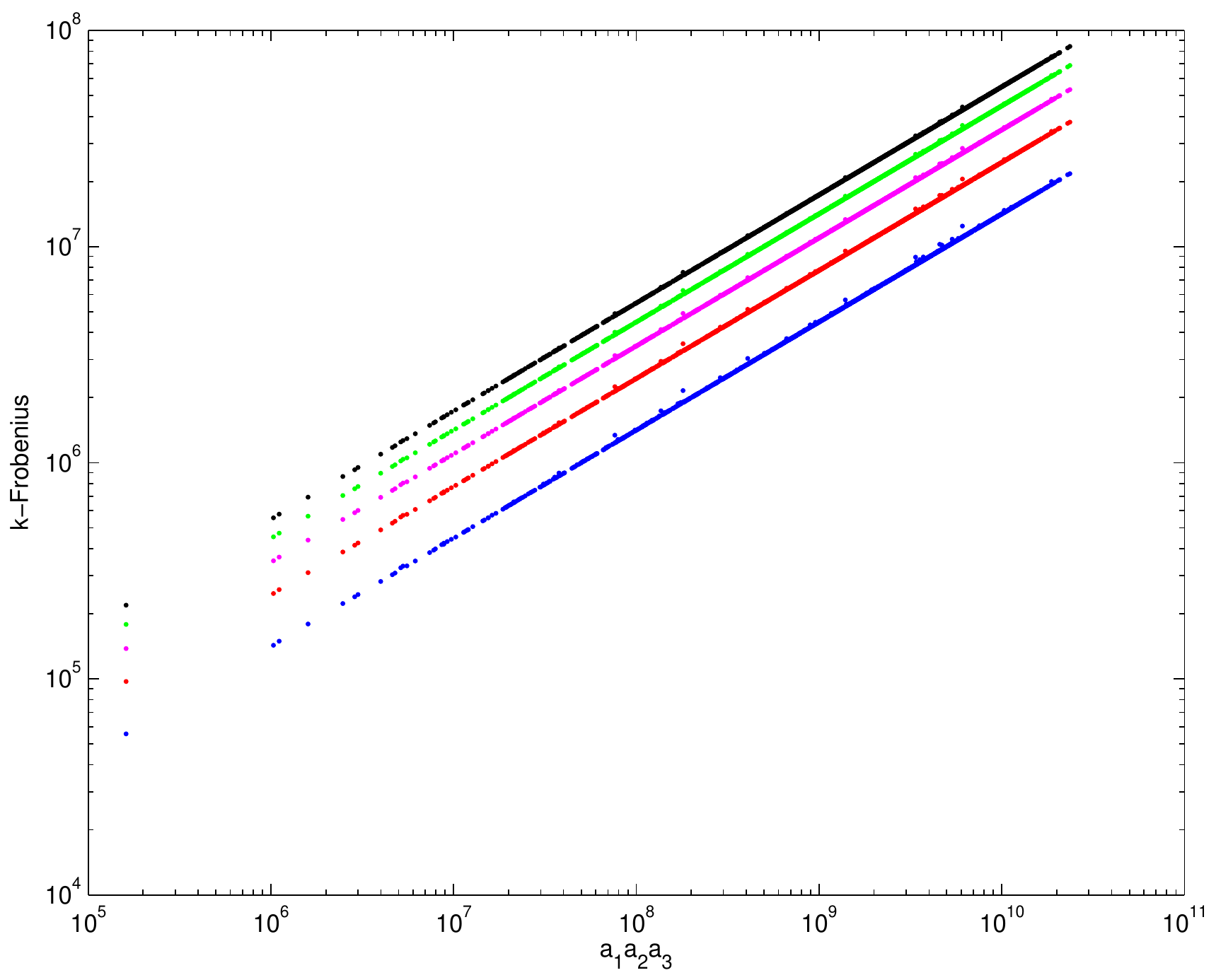}
\caption{Value of the $k$-Frobenius (over the $y$-axis) with respect to the product
$a_1a_2a_3$ for $a_i\in [2,3000]$ for $k=10000,30000,60000,100000,150000.$}
\label{figevol}
\end{center}
\end{figure}
In order to assess an approximate value for $A_k(T_0)$
we now perform a linear regression over $\log (a_1a_2a_3)$ and $\log (F_k(a_1,a_2,a_3))$
whose results are shown in the following table.
\begin{center}
\begin{tabular}{|l|c|c|}
\hline
$k$ & $A_k(T_0)$ & $\sqrt{2k}$\\ 
\hline
\;10000\; & \;141.5535\; & \;141.4214\;\\
\;30000\; & \;245.0937\;&  \;244.9490\;\\
\;60000\; & \;346.5570\; & \;346.4102\;\\
\;100000\; & \;447.3675\; & \;447.2136\;\\
\;150000\; & \;547.8815\; & \;547.7226\;\\
\hline
\end{tabular}
\end{center}

The obtained computational results allow us to make the following conjecture: 
\vskip .3cm

\noindent {\bf Conjecture:} Let $G_T=\{a=(a_1, a_2, a_3)\in \Z^3_{>0}: \gcd(a_1, a_2, a_3)=1\,, ||a||_\infty\le T\}$ and let $A_k(T)$ denote the average value of the (normalized) $k$-Frobenius number over $G_T$. Then

\begin{equation*}
\limsup_{k} |A_k(T)- \sqrt{2k}| \rightarrow 0\;\;\; \mbox{as}\;\; T\rightarrow \infty\,,
\end{equation*}
that is $A_k(T)$ asymptotically approaches the term $\sqrt{2k}$ of its lower bound (\ref{lower_asympt}).


\section{Acknowledgements} We are truly grateful to Lenny Fukshansky, Martin Henk, and Matthias K\"oppe
for their comments, encouragement and corrections that improve the paper we present today. The second author is 
grateful for partial support through an NSA grant. Some of our results were first announced  in \cite{ouripco}.

\end{document}